\documentclass[11pt,reqno,twoside]{amsart}

\usepackage{mathrsfs,amsfonts,amssymb,amsmath,amsthm}
\usepackage{titletoc,geometry,bm,indentfirst,graphicx,stmaryrd,tikz}
\usepackage[normalem]{ulem}
\usepackage{color,soul,setspace,float,appendix,multicol,cite,enumitem}

\usepackage[colorlinks=true,citecolor=red,linkcolor=blue]{hyperref}
\newcommand{\refinblack}[2]{{\hypersetup{linkcolor=black}\hyperref[#1]{#2}}}

\newcommand{\Ttwo}{\mathbb{T}_{2}}
\newcommand{\Ttwopi}{\mathbb{T}_{2\pi}}
\newcommand{\R}{\mathbb{R}}

\newcommand{\M}{\mathcal{M}}

\newcommand{\Z}{\mathbb{Z}}
\newcommand{\dd}{\mathop{}\!\mathrm{d}}
\DeclareMathOperator{\meas}{meas}

\numberwithin{equation}{section}
\newtheorem{theorem}{Theorem}[section]
\newtheorem{lemma}[theorem]{Lemma}
\newtheorem{corollary}[theorem]{Corollary}
\newtheorem{proposition}[theorem]{Proposition}

\theoremstyle{definition}

\newtheorem{definition}[theorem]{Definition}

\title[Spacetime observable symmetry on interval]{\large Observable symmetry for spacetime observability of wave equations on the interval}

\author[Bin Huang]{Bin Huang}

\author[Shengquan Xiang]{Shengquan Xiang} 

\address[Bin Huang]{School of Mathematical Sciences, Peking University, 100871, Beijing, China.
}
\email{bhuang25@stu.pku.edu.cn}

\address[Shengquan  Xiang]{School of Mathematical Sciences, Peking University, 100871, Beijing, China.}
\email{shengquan.xiang@math.pku.edu.cn}

\begin{document}
\vspace{5mm}
\begin{abstract}
Following the work  \cite{NiuWangXiang2026}, we further study spacetime observability for the wave equation on a one-dimensional interval. The main task is to characterize the observable symmetry condition (OSC) under both Dirichlet and Neumann boundary conditions, which is more complex than the torus setting due to boundary reflection, and to discuss the relation between OSC under different settings. We prove that OSC, together with the geometric control condition (GCC), is necessary and sufficient for observability. We also provide a necessary and sufficient condition for the related unique continuation property. 
\end{abstract}
\maketitle

\section{Introduction}\label{sec:intro}
Let $T>0$, $I=(0,1)$, and let $G\subset[0,T]\times I$ be a measurable spacetime set. We consider the \textit{observability} of the wave equation on the observation set $G$: does there exist a constant $C>0$ such that every weak solution $u$ of
\vspace{1mm}

$(i)$ the Dirichlet boundary problem
\begin{equation}\label{eq:wave-di}\begin{cases}
\partial_t^2 u - \partial_x^2 u = 0, & (t,x) \in [0,T]\times I,\\
u(t,0) = u(t,1) = 0, & t \in [0,T],\\
(u,\partial_t u)|_{t=0} = (u_0,u_1) \in H_0^1(I) \times L^2(I)
\end{cases}\end{equation}

$(ii)$ or the Neumann boundary problem

\begin{equation}\label{eq:wave-ne}\begin{cases}
\partial_t^2 u - \partial_x^2 u = 0, & (t,x) \in [0,T]\times I,\\
\partial_x u(t,0) = \partial_x u(t,1) = 0, & t \in [0,T],\\
(u,\partial_t u)|_{t=0} = (u_0,u_1) \in H^1(I) \times L^2(I)
\end{cases}\end{equation}
satisfies the observability inequality
\begin{equation}\label{eq:obs} \|\partial_xu_0\|_{L^2(I)}^2+\|u_1\|_{L^2(I)}^2
\leq C\iint_G|\partial_tu(t,x)|^2\,\dd x\,\dd t.
\end{equation}  

\subsection{Background}

Through the Hilbert Uniqueness Method (HUM) introduced by Lions, exact controllability of the wave equation can be reduced to an observability inequality for the corresponding adjoint system; see \cite{Lions1988book,Coron2007book-control}. The central task is therefore to establish an energy estimate that reconstructs the full state from partial observations.

Several fundamental methods are available for deriving such inequalities for wave equations and, more generally, for hyperbolic systems. The Fourier method and characteristic method are particularly effective in one dimension, where spectral decompositions and Ingham-type inequalities yield sharp observability results and optimal control times; see \cite{ingham1936some,avdonin1995families, zuazua-review, Li-1, LY-2, Russell-1, DagerZuazua2006, Tucsnak2009book}. The multiplier method instead uses energy identities obtained by integration by parts with suitable first-order differential operators; see \cite{Lions1988book, miranda1995hum, Tucsnak2009book, Zhang2000}. Carleman estimates, which are weighted spacetime integral inequalities involving a large parameter, provide another robust method and are also a powerful tool for proving unique continuation; see \cite{carleman1939probleme,hormanderanalysis, DuyckaertsZhangZuazua2008, Tataru, Laurent-Leautaud-2019}.

\subsubsection{Geometric control condition}

Microlocal analysis led to a major advance in the control of wave equations: the geometric control condition (GCC), introduced by Bardos, Lebeau, and Rauch \cite{BLR-gcc}. Their seminal result states that, for a classical cylindrical observation region $G=(0,T)\times\omega$, the observability inequality holds precisely when every generalized bicharacteristic of the wave operator enters the observation region within the observation time. GCC reflects propagation of singularities and connects controllability with geometric optics. Its necessity is commonly demonstrated using Gaussian beams; see \cite{burq1997condition,ralston1982gaussian}.

Since then, GCC has become a standard setting for hyperbolic equations, with extensions to manifolds, variable-coefficient operators, and boundary control problems. It also plays a central role in stabilization and semiclassical analysis; see \cite{Burq-98, Dehman-Sylvain-Zuazua,Lebeau-1996, Krieger-Xiang-2024,zuazua-review, Coron-Krieger-Xiang-2025, Coron-Krieger-Xiang-GWM-2025,Dehman-Sylvain-Zuazua, Shao}.

\subsubsection{Observable symmetry condition}
More recently, observability theory has been extended beyond classical cylindrical regions to {\it general spacetime measurable sets} $G\subset (0, T)\times\Omega$. This setting is motivated by moving sensors, intermittent observations, and sparse measurement strategies; see \cite{Castro-Cindea-Munch-2014,RouLebeauAnalPDE2017,Shao,PK,NWX-KDV,burq-zhu}.  The observability on measurable sets for both parabolic and dispersive equations has been extensively investigated in the literature \cite{BM-23, WWXZ-2010, WWZZ, WangWangZhang2019, AEWZ, NWX-KDV, NiuWangXiang2026}.

In this direction \cite{NiuWangXiang2026}  discovered a new phenomenon: {\it although GCC remains necessary, it is no longer sufficient for general spacetime observation sets}. The work \cite{NiuWangXiang2026} demonstrates this for the wave equation on the one-dimensional torus. A measurable set $G\subset(0,T)\times\Ttwopi$ may satisfy GCC and even occupy almost all of spacetime, while observability still fails because wave components can cancel on $G$.
The spacetime {\it observable symmetry condition (OSC)}, introduced in \cite{NiuWangXiang2026}, rules out precisely the symmetric configurations of the observation set that permit this cancellation.

This motivates extending the analysis from the torus to domains with boundary. The one-dimensional interval with Dirichlet or Neumann boundary conditions is the basic model in which to examine how boundary reflection, symmetry, and interference interact with spacetime observation.

\subsection{Main results}
We extend the torus analysis to the interval with both Dirichlet and Neumann boundary conditions. In either case, GCC remains necessary but is not sufficient for observability. Figures~\ref{fig:intro-di-ctex} and~\ref{fig:intro-ne-tor-ctex} illustrate the distinct geometric mechanisms involved.

\begin{figure}[h]
\tikzset{every picture/.style={line width=0.75pt}} 
\begin{tikzpicture}[x=0.75pt,y=0.75pt,yscale=-1,xscale=1]
\draw   (40,40) -- (200,40) -- (200,200) -- (40,200) -- cycle ;
\draw    (40,200) -- (40,22) ;
\draw [shift={(40,20)}, rotate = 90] [color={rgb, 255:red, 0; green, 0; blue, 0 }  ][line width=0.75]    (10.93,-3.29) .. controls (6.95,-1.4) and (3.31,-0.3) .. (0,0) .. controls (3.31,0.3) and (6.95,1.4) .. (10.93,3.29)   ;
\draw    (40,200) -- (218,200) ;
\draw [shift={(220,200)}, rotate = 180] [color={rgb, 255:red, 0; green, 0; blue, 0 }  ][line width=0.75]    (10.93,-3.29) .. controls (6.95,-1.4) and (3.31,-0.3) .. (0,0) .. controls (3.31,0.3) and (6.95,1.4) .. (10.93,3.29)   ;
\draw  [fill={rgb, 255:red, 126; green, 211; blue, 33 }  ,fill opacity=1 ] (40,40) -- (80,80) -- (40,120) -- cycle ;
\draw  [fill={rgb, 255:red, 126; green, 211; blue, 33 }  ,fill opacity=1 ] (40,120) -- (80,160) -- (40,200) -- cycle ;
\draw  [fill={rgb, 255:red, 126; green, 211; blue, 33 }  ,fill opacity=1 ] (200,40) -- (200,120) -- (160,80) -- cycle ;
\draw  [fill={rgb, 255:red, 126; green, 211; blue, 33 }  ,fill opacity=1 ] (200,120) -- (200,200) -- (160,160) -- cycle ;
\draw  [dash pattern={on 4.5pt off 4.5pt}]  (40,40) -- (200,200) ;
\draw  [dash pattern={on 4.5pt off 4.5pt}]  (40,200) -- (200,40) ;

\draw (38,203) node [anchor=north west][inner sep=0.75pt]   [align=left] {$0$};
\draw (215,203) node [anchor=north west][inner sep=0.75pt]   [align=left] {$x$};
\draw (192,203) node [anchor=north west][inner sep=0.75pt]   [align=left] {$1$};
\draw (42,23) node [anchor=north west][inner sep=0.75pt]   [align=left] {$t$};
\draw (261,45) node [anchor=north west][inner sep=0.75pt]   [align=left] {The green region represents $G$.};

\draw (261,80) node [anchor=north west][inner sep=0.75pt]   [align=left] {There is a solution $u(t,x)$ of \eqref{eq:wave-di}};
\draw (261,105) node [anchor=north west][inner sep=0.75pt]   [align=left] {with $\partial_tu=0$ a.e. on $G$.};

\draw (261,140) node [anchor=north west][inner sep=0.75pt]   [align=left] {But $(u(0,x),\partial_tu(0,x))\neq(0,0)$,};

\draw (261,165) node [anchor=north west][inner sep=0.75pt]   [align=left] {$G$ satisfies GCC,};

\draw (261,190) node [anchor=north west][inner sep=0.75pt]   [align=left] {but observability \eqref{eq:obs} fails on $G$.};
\end{tikzpicture}
    \caption{Insufficiency of GCC in the Dirichlet setting}
    \label{fig:intro-di-ctex}
\end{figure}

\begin{figure}[h]
\tikzset{every picture/.style={line width=0.75pt}} 
\begin{tikzpicture}[x=0.75pt,y=0.75pt,yscale=-1,xscale=1]
\draw   (100,60) -- (260,60) -- (260,220) -- (100,220) -- cycle ;
\draw    (100,220) -- (100,42) ;
\draw [shift={(100,40)}, rotate = 90] [color={rgb, 255:red, 0; green, 0; blue, 0 }  ][line width=0.75]    (10.93,-3.29) .. controls (6.95,-1.4) and (3.31,-0.3) .. (0,0) .. controls (3.31,0.3) and (6.95,1.4) .. (10.93,3.29)   ;
\draw    (100,220) -- (278,220) ;
\draw [shift={(280,220)}, rotate = 180] [color={rgb, 255:red, 0; green, 0; blue, 0 }  ][line width=0.75]    (10.93,-3.29) .. controls (6.95,-1.4) and (3.31,-0.3) .. (0,0) .. controls (3.31,0.3) and (6.95,1.4) .. (10.93,3.29)   ;
\draw  [fill={rgb, 255:red, 245; green, 166; blue, 35 }  ,fill opacity=1 ] (100,60) -- (140,100) -- (180,60) -- cycle ;
\draw  [fill={rgb, 255:red, 245; green, 166; blue, 35 }  ,fill opacity=1 ] (180,220) -- (140,180) -- (100,220) -- cycle ;
\draw  [fill={rgb, 255:red, 245; green, 166; blue, 35 }  ,fill opacity=1 ] (260,60) -- (180,60) -- (220,100) -- cycle ;
\draw  [fill={rgb, 255:red, 245; green, 166; blue, 35 }  ,fill opacity=1 ] (180,220) -- (260,220) -- (220,180) -- cycle ;
\draw   (350,60) -- (510,60) -- (510,220) -- (350,220) -- cycle ;
\draw    (350,220) -- (350,42) ;
\draw [shift={(350,40)}, rotate = 90] [color={rgb, 255:red, 0; green, 0; blue, 0 }  ][line width=0.75]    (10.93,-3.29) .. controls (6.95,-1.4) and (3.31,-0.3) .. (0,0) .. controls (3.31,0.3) and (6.95,1.4) .. (10.93,3.29)   ;
\draw    (350,220) -- (528,220) ;
\draw [shift={(530,220)}, rotate = 180] [color={rgb, 255:red, 0; green, 0; blue, 0 }  ][line width=0.75]    (10.93,-3.29) .. controls (6.95,-1.4) and (3.31,-0.3) .. (0,0) .. controls (3.31,0.3) and (6.95,1.4) .. (10.93,3.29)   ;
\draw  [fill={rgb, 255:red, 208; green, 2; blue, 27 }  ,fill opacity=1 ] (430,60) -- (470,100) -- (430,140) -- (390,100) -- cycle ;
\draw  [color={rgb, 255:red, 0; green, 0; blue, 0 }  ,draw opacity=1 ][fill={rgb, 255:red, 208; green, 2; blue, 27 }  ,fill opacity=1 ] (430,140) -- (470,180) -- (430,220) -- (390,180) -- cycle ;

\draw (95,221) node [anchor=north west][inner sep=0.75pt]   [align=left] {$0$};
\draw (50,250) node [anchor=north west][inner sep=0.75pt]   [align=left] {Neumann case: $G$ is the orange region.};
\draw (340,250) node [anchor=north west][inner sep=0.75pt]   [align=left] {Torus case: $G$ is the red region.};
\draw (345,221) node [anchor=north west][inner sep=0.75pt]   [align=left] {$0$};
\draw (284,210) node [anchor=north west][inner sep=0.75pt]   [align=left] {$x$};
\draw (534,210) node [anchor=north west][inner sep=0.75pt]   [align=left] {$x$};
\draw (102,43) node [anchor=north west][inner sep=0.75pt]   [align=left] {$t$};
\draw (352,43) node [anchor=north west][inner sep=0.75pt]   [align=left] {$t$};
\draw (253,221) node [anchor=north west][inner sep=0.75pt]   [align=left] {$1$};
\draw (505,221) node [anchor=north west][inner sep=0.75pt]   [align=left] {$1$};
\end{tikzpicture}
    \caption{Insufficiency of GCC in the Neumann and torus settings}
    \label{fig:intro-ne-tor-ctex}
\end{figure}

This raises the central question: which geometric condition on $G$ precisely characterizes observability for the wave equation on an interval? Our main theorem provides the answer.

\begin{theorem}\label{thm:main-obs}
Let $T>0$ and let $G\subset[0,T]\times I$ be measurable. The following properties are equivalent.
\begin{enumerate}
    \item The observability inequality \eqref{eq:obs} holds for every solution $u$ to \eqref{eq:wave-di} (resp. \eqref{eq:wave-ne}).
    \item $G$ satisfies \refinblack{def:osc-d}{\textnormal{(OSC-D)}} (resp. \refinblack{def:osc-n}{\textnormal{(OSC-N)}}) and \refinblack{def:gcc}{\textnormal{(GCC)}}.
\end{enumerate}

\end{theorem}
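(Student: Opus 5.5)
We reduce the interval problem to the torus of length $2$ and then follow the structure of the torus argument of \cite{NiuWangXiang2026}. By d'Alembert, every solution of \eqref{eq:wave-di} or \eqref{eq:wave-ne} is
\[
u(t,x)=f(x+t)+g(x-t),
\]
and $\partial_t u=f'(x+t)-g'(x-t)$. The boundary condition ties $g$ to $f$. In the Dirichlet case $g(s)=-f(-s)$, so the time derivative is $F(x+t)-F(t-x)$. In the Neumann case the sign flips and it is $F(x+t)+F(t-x)$. Here $F=f'\in L^2(\Ttwo)$, and for Neumann $F$ is taken mean-free modulo the constant mode, which is handled separately. The energy $\|\partial_xu_0\|^2+\|u_1\|^2$ is comparable to $\|F\|_{L^2(\Ttwo)}^2$. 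Thus \eqref{eq:obs} becomes the estimate
\[
\|F\|^2_{L^2(\Ttwo)}\le C\iint_G |F(x+t)\mp F(t-x)|^2\,\dd x\,\dd t .
\]

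We change variables to characteristic coordinates $a=t+x$, $b=t-x$ (mod $2$). The set $G$ becomes a set $\tilde G$ on the torus $\Ttwo\times\Ttwo$, counted with multiplicity. The right-hand side splits into a diagonal part and a cross term:
\[
\int |F(a)|^2\,m_1(a)\,\dd a+\int |F(b)|^2\,m_2(b)\,\dd b \;\mp\; 2\,\mathrm{Re}\iint_{\tilde G}F(a)\overline{F(b)}\,\dd a\,\dd b .
\]
Here $m_1,m_2$ are the measures of the characteristic sections of $G$. The proof then runs in three steps.

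\textbf{Step 1: necessity of (GCC).} If a reflected characteristic, i.e. a line $a=\text{const}$ or $b=\text{const}$ on $\Ttwo$, essentially misses $G$, then $m_i$ vanishes near that value. We take $F$ to be a bump concentrated there. The diagonal terms are then small relative to $\|F\|^2$, and the cross term is also small because it is bounded by $|\tilde G\cap(\operatorname{supp}F)^2|\,\|F\|_{L^1}^2$, which is $o(\|F\|_2^2)$ as the support shrinks. This is the standard concentration (wave packet) argument.

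\textbf{Step 2: necessity of (OSC-D)/(OSC-N).} I expect the definitions to say, in essence, that no nonzero $F$ has $F(x+t)\mp F(t-x)=0$ a.e. on $G$. Equivalently, the unique continuation property holds for the reduced equation, encoding the reflection symmetries $x\mapsto -x$ and $x\mapsto 2-x$. If this fails, the corresponding solution is nonzero but $\partial_tu=0$ on $G$, so \eqref{eq:obs} fails. This is the situation of Figure~\ref{fig:intro-di-ctex}.

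\textbf{Step 3: sufficiency.} Assume (GCC). Then $m_1,m_2\ge c>0$ a.e., so the diagonal part is coercive. The cross term is $\langle KF,F\rangle$ for the integral operator with kernel $\mathbf 1_{\tilde G}(a,b)$. This kernel lies in $L^2(\Ttwo\times\Ttwo)$, so $K$ is Hilbert–Schmidt and hence compact. The quadratic form is therefore a coercive form plus a compact perturbation. A Fredholm/compactness–uniqueness argument then gives the inequality with a possibly large $C$, provided the kernel $\{F:\ \text{form}(F)=0\}$ is trivial. That kernel is exactly the set excluded by OSC, which closes the argument. For Neumann we also track the zero mode: constants give $\partial_tu=0$, so energy must be measured modulo them, which is consistent with \eqref{eq:obs} since $\partial_x u_0=0$ and $u_1=0$ for constants.

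The main obstacle is matching the abstract kernel condition with the paper's explicit geometric statement of (OSC-D)/(OSC-N). If the OSC is phrased geometrically, e.g. via symmetric sets $\{(a,b)\}$ versus $\{(b,a)\}$ or with the reflections of $G$, we must show that a nontrivial $F$ annihilating the form exists iff such a symmetric configuration occurs. I would prove this by noting that $F(a)=\pm F(b)$ on $\tilde G$ forces $|F|$ to be constant along connected chains of $\tilde G$. One then builds $F$ as a signed indicator on the resulting equivalence classes, mimicking the torus proof of \cite{NiuWangXiang2026} but with the reflected pairing $b\leftrightarrow -b$ and the sign $\mp$ coming from the boundary condition.
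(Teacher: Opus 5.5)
\textbf{Your proposal has a genuine gap: the link between the kernel of your quadratic form and the conditions (OSC-D)/(OSC-N) is not proved.}

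Your reduction to a single profile $F\in L^2(\Ttwo)$ is sound, and so is the mechanism of Step~3, modulo the zero-mode issue below. Step~3 is a genuinely different route to sufficiency. The paper extends to $\Ttwo$, quotes the weak observability estimate of \cite{NiuWangXiang2026}, and runs compactness--uniqueness as in \cite{BLR-gcc}. You instead split the observed quantity into a coercive multiplication operator plus a Hilbert--Schmidt cross term, so that failure of the inequality yields, via a minimizing sequence, a nonzero element of the kernel.

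The theorem then rests on one claim: the set of $F$ with $F(a)=\pm F(b)$ for a.e. $(a,b)\in\tilde G$ is trivial exactly when (OSC-D) (resp. (OSC-N)) holds. You leave precisely this open as ``the main obstacle'', yet it is the real content of the paper (Propositions~\ref{prop:ucp-osc-d}, \ref{prop:ucp-d}, \ref{prop:ucp-osc-n}, \ref{prop:ucp-n}). Your proposed mechanism does not deliver it. The relation generated by pairs $(a,b)\in\tilde G$ typically has null or non-measurable classes. So ``$|F|$ constant along connected chains'' gives neither a measurable set of intermediate measure nor an inclusion of $G$ into $\M_{\widetilde A}(T)\cup\M_{\widetilde I\setminus\widetilde A}(T)$, or into $\mathcal W_{\widetilde E,\widetilde F}(T)\cup\M_{\widetilde I\setminus(\widetilde E\cup\widetilde F)}(T)$.

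The missing idea is to work with level sets on the labelled cover.
\begin{itemize}
\item Transport $F$ to $f$ on $\widetilde I$ via $s\mapsto[s;\eta]$ for $0<s<1$ and $s\mapsto[2-s;\xi]$ for $1<s<2$. Then the lines $a=s$ and $b=s$ are the two legs of one reflected characteristic, and $b$, $a$ label the generalized coordinates $\widetilde y$, $\widetilde z$ of the point.
\item The kernel condition becomes $f(\widetilde y)=f(\widetilde z)$ (Dirichlet), resp. $f(\widetilde y)=-f(\widetilde z)$ (Neumann), at a.e. point of $G$.
\item The paper uses (Weak GCC) and Fubini to show that $f$ is countably valued with positive-measure level sets. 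It then takes $\widetilde A$ to be one level set, resp. $\widetilde E=\{f>0\}$ and $\widetilde F=\{f<0\}$. In the Dirichlet case a superlevel set $\{f>c\}$ of intermediate measure also works directly.
\item For necessity you must exhibit explicit kernel elements, checked through Lemmas~\ref{lem:refl-d} and~\ref{lem:refl-n}: $f=\chi_{\widetilde A}+c_A\chi_{\widetilde I\setminus\widetilde A}$ with $c_A=-|\widetilde A|/(2-|\widetilde A|)$ (Dirichlet), and $f=\chi_{\widetilde E}-\chi_{\widetilde F}$ (Neumann). Your ``signed indicator'' only gestures at this and misses the mean-zero normalization.
\end{itemize}

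There are two smaller slips. First, you have the zero mode backwards. In the Dirichlet case $\partial_tu=F(a)-F(b)$ vanishes for constant $F$. These are excluded only because $u(t,1)=0$ forces $\int_{\Ttwo}F=0$, so Step~3 must be run on mean-zero $F$, and this is why $c_A$ is needed. In the Neumann case a constant $F$ gives $u=\alpha+\beta t$ with $\beta\neq0$, which is not in the kernel, so no restriction should be imposed there.

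Second, (GCC) controls only $m_1+m_2$, the length of $G$ along both legs of a reflected characteristic, not $m_1$ and $m_2$ separately. This does not hurt coercivity in Step~3, since the diagonal part is $\int|F|^2(m_1+m_2)$. In Step~1, failure of (GCC) gives $m_1+m_2<1/n$ on a set of positive measure rather than vanishing near a point. So take $F$ to be a normalized indicator of a small subset of that set, minus its mean in the Dirichlet case. Then bound the cross term by $\|\mu\|_{L^\infty}\|F\|_{L^1}^2$, where $\mu$ is the multiplicity of $\tilde G$.
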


Theorem~\ref{thm:main-obs} gives a complete characterization for an arbitrary measurable spacetime observation set on the interval, under either of the two standard boundary conditions. 
Note that the two interval symmetry conditions are genuinely different: neither (OSC-D) nor (OSC-N) implies the other. Nevertheless, they fit exactly into the torus theory after reflection. Let $G_{\mathrm{ref}}\subset[0,T]\times\Ttwo$ be the reflected periodization of $G$ defined in~\eqref{eq:g-ref}. For brevity, write (OBS-D), (OBS-N), and (OBS-T) for the validity of the observability inequality in the Dirichlet, Neumann, and period-$2$ torus settings, respectively. Section~\ref{subsec:diff-n} proves that reflection preserves (GCC), that (OSC-T) on $G_{\mathrm{ref}}$ is equivalent to the conjunction of (OSC-D) and (OSC-N) on $G$, and consequently that (OBS-T) on $G_{\mathrm{ref}}$ is equivalent to the simultaneous validity of (OBS-D) and (OBS-N) on $G$. 
\vspace{2mm}

The standard compactness--uniqueness argument combines a unique continuation property (UCP) with weak observability to obtain observability. Here UCP is the following qualitative analogue of observability:
\begin{equation*}\begin{aligned}
&\text{if $\partial_tu=0$ a.e. on $G$, then $u=0$ for the Dirichlet problem,}\\
&\text{and $u$ is constant for the Neumann problem.}
\end{aligned}\end{equation*}
Failure of UCP immediately implies failure of observability. The following characterization is also of independent interest.

\begin{theorem}\label{thm:main-ucp}
Let $T>0$ and let $G\subset[0,T]\times I$ be measurable. The following properties are equivalent.
\begin{enumerate}
    \item Unique continuation holds for every solution $u$ to \eqref{eq:wave-di} (resp. \eqref{eq:wave-ne}).
    \item $G$ satisfies \refinblack{def:osc-d}{\textnormal{(OSC-D)}} (resp. \refinblack{def:osc-n}{\textnormal{(OSC-N)}}) and \refinblack{def:w-gcc}{\textnormal{(Weak GCC)}}.
\end{enumerate}
\end{theorem}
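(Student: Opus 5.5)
We write every solution through d'Alembert's formula after reflection. For the Dirichlet problem, extend $u$ oddly across $x=0$ and $2$-periodically; for the Neumann problem, extend it evenly. The extension solves the wave equation on $\Ttwo$, so $\partial_t u(t,x)=f(x+t)+g(x-t)$, where $f,g\in L^2(\Ttwo)$. The boundary condition ties the two profiles together: $g(y)=-f(-y)$ in the Dirichlet case and $g(y)=f(-y)$ in the Neumann case. The energy $\|\partial_x u_0\|^2+\|u_1\|^2$ is comparable to $\|f\|_{L^2(\Ttwo)}^2$, modulo constants in the Neumann case.

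In these characteristic coordinates, the condition $\partial_t u=0$ a.e. on $G$ becomes the functional equation
\begin{equation*}
f(x+t)\mp f(t-x)=0 \quad\text{for a.e. } (t,x)\in G .
\end{equation*}
For each point $x+t=a$, the set $G$ determines which values $b=t-x$ are linked to $a$. This defines a measurable relation on $\Ttwo\times\Ttwo$, and UCP asks whether this relation forces $f\equiv 0$ (resp. $f$ constant).

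I will prove the two directions as follows.

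\emph{Necessity of Weak GCC.} If Weak GCC fails, there is a positive-measure set $E\subset\Ttwo$ of characteristic lines, reflected at the boundary, that a.e. avoid $G$. I take $f=\mathbf 1_E$, or a nonconstant modification of it, supported on those lines. Then $\partial_t u$ vanishes on $G$ while $u$ is nontrivial. This is the standard construction, adapted to broken characteristics on $I$.

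\emph{Necessity of OSC-D/OSC-N.} Here I follow the torus construction of \cite{NiuWangXiang2026} after reflection. When the symmetry condition fails, the relation above admits a nontrivial sign-compatible function $f$: for example $f$ equal to $+1$ on one set and $\pm1$ on its symmetric image, chosen so that the two terms cancel exactly on $G$, as in Figure~\ref{fig:intro-di-ctex}. The parity forced by the boundary fixes the sign, which explains why OSC-D and OSC-N differ.

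\emph{Sufficiency.} Assume OSC and Weak GCC, and suppose $f$ solves the functional equation. Weak GCC says that a.e. characteristic meets $G$ in positive measure. So for a.e. $a$ the set of $b$ linked to $a$ has positive measure, and $f(a)=\pm f(b)$ there. I then consider the equivalence classes generated by the relation $a\sim \pm b$, including the boundary reflection $y\mapsto -y$. The symmetry condition is exactly what rules out a nontrivial odd-type (Dirichlet) or non-constant (Neumann) function that is consistent with these classes. Concretely, I would show, via a density-point argument, that $|f|$ is invariant under the generated relation and is therefore constant. The sign pattern is then forced by OSC to be the trivial one: $f\equiv 0$ in the Dirichlet case, since oddness combined with constancy of $|f|$ and the sign constraints gives $0$, and $f$ constant in the Neumann case.

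The main obstacle is this sufficiency step. It requires working with measurable relations rather than sets, and the only ergodicity-type input available is OSC itself. I would first reduce to the torus statement: by the reflection result of Section~\ref{subsec:diff-n}, OSC-T on $G_{\mathrm{ref}}$ is equivalent to OSC-D together with OSC-N. However, we only have one of the two, so the reduction is incomplete. I would therefore redo the torus UCP proof restricted to odd (resp. even) profiles $f$, checking that OSC-D (resp. OSC-N) is exactly the hypothesis that proof uses within that parity class.
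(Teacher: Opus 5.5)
Your encoding is the paper's. After you eliminate $g$, your profile $f$ on $\Ttwo$ is the paper's function on $\widetilde I$: for $y\in I$, $f(y)=\partial_\xi u(0,y)$ and $f(-y)=\partial_\eta u(0,y)$ (Dirichlet), resp. $-\partial_\eta u(0,y)$ (Neumann). The condition $\partial_tu=0$ on $G$ then reads $f(\widetilde y)=f(\widetilde z)$, resp. $f(\widetilde y)=-f(\widetilde z)$, for the generalized coordinates of a.e.\ point of $G$.

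\textbf{The gap is the sufficiency direction.} This is the substance of the theorem, and you only outline it. Invariance of $|f|$ is immediate from the relation; no density points are needed. What needs proof is ``therefore constant'', i.e.\ which nontrivial set you feed into OSC, and you never specify it.

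\textbf{In the Neumann case your stated route cannot work.} Invariance of $|f|$ only shows that the sets $\{|f|>c\}$ give a Dirichlet-type partition: both coordinates inside, or both outside. (OSC-N) does not exclude such partitions. The boundary-triangle set of Figure~\ref{fig:bnd-tri-ne-di} satisfies (OSC-N), yet it is $(\widetilde A;T)$-observable symmetric for a nontrivial $\widetilde A$. You must keep the signs and test (OSC-N) on $\widetilde E=\{f>0\}$, $\widetilde F=\{f<0\}$. The relation $f(\widetilde y)=-f(\widetilde z)$ places $G$, up to a null set, in $\mathcal W_{\widetilde E,\widetilde F}(T)\cup\M_{\{f=0\}}(T)$. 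If only one of $\widetilde E,\widetilde F$ has positive measure, split it into two positive-measure halves.

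Two related corrections. The Neumann conclusion is $f=0$, not ``$f$ constant'': in your normalization $f\equiv c$ gives $\partial_tu\equiv 2c$. And the relation must not identify $y$ with $-y$, since $f(y)$ and $f(-y)$ are independent data.

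\textbf{How the paper closes this step.} Weak GCC, Fubini, and the fact that $s\mapsto F(\widetilde x,s)$ takes each value at most $\lfloor T\rfloor+2$ times show that $f^{-1}(f(\widetilde x))$ (resp.\ $f^{-1}(-f(\widetilde x))$) has positive measure for a.e.\ $\widetilde x$. Hence $f$ is a countable simple function. Suppose $f$ is not constant (resp.\ not zero). Then either one level set $\widetilde A_1$ against its complement, or, in the Neumann case, the union of positive level sets against the union of negative ones, makes $G$ observable symmetric. This contradicts (OSC-D), resp.\ (OSC-N). Testing OSC directly on $\{f>c\}$, resp.\ on $(\{f>0\},\{f<0\})$ as above, also works and does not even use Weak GCC.

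\textbf{The Dirichlet constant.} In the Dirichlet case the remaining constant is removed by $\int_{\widetilde I}f=\int_I\partial_xu_0=u_0(1)-u_0(0)=0$. It does not come from oddness of $f$: your $f=\tfrac12(\partial_xv_0+v_1)$ is neither odd nor even.

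\textbf{Your necessity construction for OSC-D.} The same compatibility condition matters here: $\pm1$ data generally violate $\int_{\widetilde I}f=0$. This is why the paper takes $f=\chi_{\widetilde A}+c_A\chi_{\widetilde I\setminus\widetilde A}$ with $c_A=-|\widetilde A|/(2-|\widetilde A|)$. Also, the relevant partition is an arbitrary nontrivial $\widetilde A$ against its complement, not a set against its ``symmetric image''.
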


Thus, the paper {\it focuses on formulating OSC for the Dirichlet and Neumann problems} in Sections~\ref{sec:osc-d} and~\ref{sec:osc-n}, and on relating these conditions to UCP in Sections~\ref{sec:ucp-d} and~\ref{sec:ucp-n}. Unlike on the torus, characteristic lines on a finite interval reflect at the boundary, so two lines initially moving in the same direction may intersect after reflection. For the quantities relevant to the Neumann problem, reflection also introduces an additional sign change.

To formulate OSC on the interval, we work on the labelled double cover $I\times\{\xi,\eta\}$. This gives a one-to-one correspondence between labelled starting points and the two families of characteristics, while keeping track of boundary reflections.

There is a further difference from the torus setting. On the torus, symmetry is described using the symmetric difference of two characteristic cylinders. On the interval, the labelled double cover reduces the condition to testing whether the two characteristics through a point of $G$ originate in the same measurable subset or in complementary subsets. See Figure~\ref{fig:intro-osc} and Sections~\ref{subsec:diff-d} and~\ref{subsec:diff-n}.

\begin{figure}[h!]  
\begin{tikzpicture}[x=0.75pt,y=0.75pt,yscale=-1,xscale=1]
\tikzset{every picture/.style={line width=0.75pt}} 
\draw  (80,320) -- (280,320)(99.87,135.83) -- (99.87,320) (273,315) -- (280,320) -- (273,325) (94.87,142.83) -- (99.87,135.83) -- (104.87,142.83)  ;
\draw  [fill={rgb, 255:red, 128; green, 128; blue, 128 }  ,fill opacity=1 ] (100,160) -- (260,160) -- (260,320) -- (100,320) -- cycle ;
\draw  (360,319.97) -- (560,319.97)(379.87,135.8) -- (379.87,319.97) (553,314.97) -- (560,319.97) -- (553,324.97) (374.87,142.8) -- (379.87,135.8) -- (384.87,142.8)  ;
\draw  [color={rgb, 255:red, 0; green, 0; blue, 0 }  ,draw opacity=1 ][fill={rgb, 255:red, 128; green, 128; blue, 128 }  ,fill opacity=1 ] (380,160) -- (540,160) -- (540,320) -- (380,320) -- cycle ;
\draw  [draw opacity=0][fill={rgb, 255:red, 255; green, 255; blue, 255 }  ,fill opacity=1 ] (260,210) -- (149.87,320) -- (99.87,320) -- (260,160) -- cycle ;
\draw  [draw opacity=0][fill={rgb, 255:red, 255; green, 255; blue, 255 }  ,fill opacity=1 ] (540,240) -- (460,320) -- (411.17,319.42) -- (540,190) -- cycle ;
\draw  [draw opacity=0][fill={rgb, 255:red, 255; green, 255; blue, 255 }  ,fill opacity=1 ] (510,160) -- (460,160) -- (540,240) -- (540,190) -- cycle ;
\draw  [fill={rgb, 255:red, 228; green, 228; blue, 228 }  ,fill opacity=1 ] (540,190) -- (540,240) -- (515,215) -- (522.29,207.71) -- cycle ;
\draw [color={rgb, 255:red, 208; green, 2; blue, 27 }  ,draw opacity=1]  [line width=2]  (530,220) -- (430,320) ;
\draw [color={rgb, 255:red, 208; green, 2; blue, 27 }  ,draw opacity=1 ] [line width=2]  (530,220) -- (540,230) ;
\draw [color={rgb, 255:red, 208; green, 2; blue, 27 }  ,draw opacity=1 ] [line width=2]  (540,230) -- (450,320) ;
\draw  [draw opacity=0][fill={rgb, 255:red, 255; green, 255; blue, 255 }  ,fill opacity=1 ] (100,160) -- (260,320) -- (210,320) -- (100,210) -- cycle ;
\draw  [fill={rgb, 255:red, 228; green, 228; blue, 228 }  ,fill opacity=1 ] (204.93,265) -- (180,290) -- (155,265) -- (179.93,240) -- cycle ;
\draw [color={rgb, 255:red, 126; green, 211; blue, 33 }  ,draw opacity=1 ][line width=2]   (179.97,265) -- (235,320) ;
\draw [color={rgb, 255:red, 74; green, 144; blue, 226 }  ,draw opacity=1 ][line width=2]   (179.97,265) -- (125,320) ;
\draw [color={rgb, 255:red, 208; green, 2; blue, 27 }  ,draw opacity=1 ][line width=1.5]    (410,320) -- (460,320) ;
\draw [color={rgb, 255:red, 74; green, 144; blue, 226 }  ,draw opacity=1 ][line width=1.5]    (99.87,320) -- (149.87,320) ;
\draw [color={rgb, 255:red, 126; green, 211; blue, 33 }  ,draw opacity=1 ][line width=1.5]    (210,320) -- (260,320) ;
\draw  [dash pattern={on 4.5pt off 4.5pt}]  (100,330) -- (260,330) ;
\draw  [dash pattern={on 4.5pt off 4.5pt}]  (100,340) -- (260,340) ;
\draw  [dash pattern={on 4.5pt off 4.5pt}]  (380,330) -- (540,330) ;
\draw  [dash pattern={on 4.5pt off 4.5pt}]  (380,340) -- (540,340) ;
\draw  [color={rgb, 255:red, 74; green, 144; blue, 226 }  ,draw opacity=1 ][fill={rgb, 255:red, 74; green, 144; blue, 226 }  ,fill opacity=1 ] (100,328) -- (150,328) -- (150,332) -- (100,332) -- cycle ;
\draw  [color={rgb, 255:red, 126; green, 211; blue, 33 }  ,draw opacity=1 ][fill={rgb, 255:red, 126; green, 211; blue, 33 }  ,fill opacity=1 ] (210,338) -- (260,338) -- (260,342) -- (210,342) -- cycle ;
\draw  [color={rgb, 255:red, 208; green, 2; blue, 27 }  ,draw opacity=1 ][fill={rgb, 255:red, 208; green, 2; blue, 27 }  ,fill opacity=1 ] (410,328) -- (460,328) -- (460,332) -- (410,332) -- cycle ;
\draw  [color={rgb, 255:red, 208; green, 2; blue, 27 }  ,draw opacity=1 ][fill={rgb, 255:red, 208; green, 2; blue, 27 }  ,fill opacity=1 ] (410,338) -- (460,338) -- (460,342) -- (410,342) -- cycle ;

\draw (60,345) node [anchor=north west][inner sep=0.75pt]   [align=left] {Characteristics starting in distinct sets $\widetilde E$ and $\widetilde F$;};
\draw (60,365) node [anchor=north west][inner sep=0.75pt]   [align=left] {$\widetilde E$ is the blue block and $\widetilde F$ is the green block.};
\draw (380,345) node [anchor=north west][inner sep=0.75pt]   [align=left] {Characteristics starting in the same set $\widetilde A$;};
\draw (380,365) node [anchor=north west][inner sep=0.75pt]   [align=left] {$\widetilde A$ is the union of the two red blocks.};
\end{tikzpicture}
    \caption{Characterization of the symmetry condition on a one-dimensional interval}
    \label{fig:intro-osc}
\end{figure}

\subsection{Organization of this paper}
Section~\ref{sec:osc-d} introduces the characteristic coordinates and observable symmetry condition for the Dirichlet problem and compares them with the torus formulation. Section~\ref{sec:ucp-d} proves the corresponding UCP characterization.

Section~\ref{sec:osc-n} develops the reflection lemma and counterexamples for the Neumann problem. Its final subsection proves that (OSC-D) and (OSC-N) are independent and identifies their conjunction with (OSC-T) for the reflected periodization $G_{\mathrm{ref}}$; it also records the resulting equivalence among the three observability properties. The Neumann UCP argument is then given separately in Section~\ref{sec:ucp-n}.

Finally, Section~\ref{sec:obs} combines the UCP characterizations with weak observability and proves Theorems~\ref{thm:main-obs} and~\ref{thm:main-ucp}.

\section{Observable symmetry condition in the Dirichlet setting}\label{sec:osc-d}

We now turn to the observability problem in the Dirichlet boundary setting. In this section, we describe how the observable symmetry condition is formulated, analyze the differences from the torus case, and provide several illustrative examples.

\subsection{Basic definitions}\label{subsec:bas-def-d}
Let $u(t,x)\in C([0,T];H_0^1(I))\cap C^1([0,T];L^2(I))$
be a solution of~\eqref{eq:wave-di}. The \emph{characteristic lines} are given by $x\pm t=\text{constant}$ and reflect when they reach the boundary.

At each point $(t,x)\in[0,T]\times I$, there are two mutually orthogonal characteristic directions, denoted by $\xi$ and $\eta$, as illustrated in Figure~\ref{fig:char-lines}. We use the normalization
\begin{equation}\label{eq:char-der} 2\partial_\xi u=\partial_xu+\partial_tu,
\qquad
2\partial_\eta u=\partial_xu-\partial_tu.
\end{equation}

\begin{figure}[htp]
\tikzset{every picture/.style={line width=0.75pt}} 
\begin{tikzpicture}[x=0.75pt,y=0.75pt,yscale=100,xscale=100,scale=1.8]
\draw[->, thick] (0,0) -- (1.1,0) node[right] {$x$};
\draw[->, thick] (0,0) -- (0,0.9) node[above] {$t$};
\draw[->, thick] (0.5,0) -- (0.8,0.3) node[right] {$\xi$};
\draw[->, thick] (0.5,0) -- (0.3,0.2) node[above] {$\eta$};
\node[left] at (0,0.8) {$T$};
\node[below] at (1,0) {$1$};
\node[below left] at (0,0) {$0$};
\node[left] at (0.3,0.45) {$L_{[x_2;\eta]}$};
\node[right] at (0.7,0.55) {$L_{[x_1;\xi]}$};
\draw[thick] (0,0) -- (1,0) -- (1,0.8) -- (0,0.8) -- cycle;
\draw[red!60,thick] (0.75,0) -- (1,0.25) -- (0.45,0.8);
\node[below] at (0.75,0) {$x_1$};
\draw[blue!60,thick] (0.1,0) -- (0,0.1) -- (0.7,0.8);
\node[below] at (0.1,0) {$x_2$};
\node[right] at (1.2,0.4) {The characteristic line in blue is $L_{[x_2;\eta]}$.};
\node[right] at (1.2,0.6) {The characteristic line in red is $L_{[x_1;\xi]}$.}; 
\end{tikzpicture}
    \caption{Characteristic lines}
    \label{fig:char-lines}
\end{figure}

On the bottom interval $I$, the mark $\xi$ denotes characteristics initially directed to the right, whereas $\eta$ denotes those initially directed to the left. To distinguish the two families, define
\[ \widetilde I=I\times\{\xi,\eta\}, \]
endowed with the natural Lebesgue measure for which $|\widetilde I|=2$.

For $\widetilde x\in\widetilde I$, write
$\widetilde x=[x;\sigma]\; \forall x\in I,\forall  \sigma\in\{\xi,\eta\}$, 
and for $A\subset I$, we define
\[ [A;\sigma]=\{[x;\sigma]:x\in A\}. \]

Let $L_{[x;\xi]}$ be the characteristic line starting from $(0,x)$ in the $\xi$-direction, or equivalently from $[x;\xi]$. Define $L_{[x;\eta]}$ analogously. Figure~\ref{fig:char-lines} gives an example.

For each $(t,x)\in(0,T]\times(0,1)$, there is a unique ordered pair $(\widetilde y,\widetilde z)\in\widetilde I\times\widetilde I$ such that $(t,x)\in L_{\widetilde y}\cap L_{\widetilde z}$, with $L_{\widetilde y}$ oriented in the $\xi$-direction and $L_{\widetilde z}$ oriented in the $\eta$-direction at $(t,x)$.

\begin{definition}
For each $(t,x)\in(0,T]\times(0,1)$, the pair $(\widetilde y,\widetilde z)$ described above is called the \emph{generalized coordinates}, or simply the \emph{coordinates}, of $(t,x)$.
    
When $(t,x)$ lies on $([0,T]\times\{0,1\})\cup(\{0\}\times I)$, we set $\widetilde y=\widetilde z$.
\end{definition}

\begin{figure}[htp]
\tikzset{every picture/.style={line width=0.75pt}} 
\begin{tikzpicture}[x=0.75pt,y=0.75pt,yscale=-1,xscale=1,scale=1.2]
\draw  (80,280.4) -- (310,280.4)(100,140) -- (100,300) (303,275.4) -- (310,280.4) -- (303,285.4) (95,147) -- (100,140) -- (105,147)  ;
\draw   (100,160) -- (290,160) -- (290,280) -- (100,280) -- cycle ;
\draw [color={rgb, 255:red, 74; green, 144; blue, 226 }  ,draw opacity=1 ]   (100,250) -- (130,280) ;
\draw [color={rgb, 255:red, 74; green, 144; blue, 226 }  ,draw opacity=1 ]   (100,250) -- (190,160) ;
\draw  [dash pattern={on 4.5pt off 4.5pt}]  (100,180) -- (290,180) ;
\draw [color={rgb, 255:red, 208; green, 2; blue, 27 }  ,draw opacity=1 ]   (170,180) -- (270,280) ;
\draw [shift={(170,180)}, rotate = 45] [color={rgb, 255:red, 208; green, 2; blue, 27 }  ,draw opacity=1 ][fill={rgb, 255:red, 208; green, 2; blue, 27 }  ,fill opacity=1 ][line width=0.75]      (0, 0) circle [x radius= 3.35, y radius= 3.35]   ;

\draw (90,136) node [anchor=north west][inner sep=0.75pt]   [align=left] {$t$};
\draw (308,283) node [anchor=north west][inner sep=0.75pt]   [align=left] {$x$};
\draw (122,283.5) node [anchor=north west][inner sep=0.75pt]   [align=left] {$\widetilde{y}$};
\draw (293,176) node [anchor=north west][inner sep=0.75pt]   [align=left] {$t=s$};
\draw (230,283.5) node [anchor=north west][inner sep=0.75pt]   [align=left] {$F(\widetilde{y},s)=\widetilde{z}$};
\draw (109.67,211.5) node [anchor=north west][inner sep=0.75pt]   [align=left] {$L_{\widetilde{y}}$};
\draw (158,193) node [anchor=north west][inner sep=0.75pt]   [align=left] {$L_{\widetilde{y}}(s)$};
\end{tikzpicture}
    \caption{Definition of $F$}
    \label{fig:gen-coord-map}
\end{figure}

Given $(t,x)$ and one of its coordinates $\widetilde y$, the other coordinate $\widetilde z$ is uniquely determined. We therefore define a map
\[ F:\widetilde I\times[0,T]\longrightarrow\widetilde I, \qquad F(\widetilde y,t)=\widetilde z. \]

\subsection{Counterexample}\label{subsec:ctex-d}

We are now ready to give a counterexample in which $G$ satisfies (GCC), whereas the observability inequality~\eqref{eq:obs} fails. Here and below, $\chi_A$ denotes the indicator function of a measurable set $A$.

We set
\[ u\vert_{t=0} = x\chi_{(0,\frac{1}{2})}(x)+(1-x)\chi_{(\frac{1}{2},1)}(x),\quad\partial_t u\vert_{t=0}=0. \]
The construction of the counterexample relies on the following lemma.

\begin{lemma}\label{lem:refl-d}
Let $T>0$, and let $u$ be a solution of~\eqref{eq:wave-di}. There is a set $\mathcal N\subset[0,T]\times I$ with $\meas_{\R^2}(\mathcal N)=0$ such that, for every $(t,x)\in([0,T]\times I)\setminus\mathcal N$ with generalized coordinates $(\widetilde y,\widetilde z)$,
\[\partial_\eta u(t,x)=\left\{\begin{aligned}
    \partial_\eta u(0,y)&,\text{if } \widetilde y=[y;\xi],\\
    \partial_\xi u(0,y)&,\text{if } \widetilde y=[y;\eta],
\end{aligned}\right.\quad \partial_\xi u(t,x)=\left\{\begin{aligned}
    \partial_\xi u(0,z)&,\text{if } \widetilde z=[z;\eta],\\
    \partial_\eta u(0,z)&,\text{if } \widetilde z=[z;\xi].
\end{aligned}\right.
\]

\end{lemma}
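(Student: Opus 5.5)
The plan is to reduce the statement to the transport of Riemann invariants along characteristics, prove it pointwise for smooth data, and pass to general data by density. The basic identities are
\[
(\partial_t-\partial_x)(\partial_x u+\partial_t u)=\partial_t^2u-\partial_x^2u=0,\qquad (\partial_t+\partial_x)(\partial_x u-\partial_t u)=0.
\]
So, with the normalization \eqref{eq:char-der}, $\partial_\xi u$ is constant along lines $x+t=\mathrm{const}$, i.e.\ along characteristics travelling in the $\eta$-direction. Likewise $\partial_\eta u$ is constant along lines $x-t=\mathrm{const}$, i.e.\ along characteristics travelling in the $\xi$-direction. At the boundary the Dirichlet condition gives $\partial_t u(t,0)=\partial_t u(t,1)=0$, hence $\partial_\xi u=\partial_\eta u=\tfrac12\partial_x u$ there. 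Reflection therefore swaps the two invariants while the direction of the characteristic flips. This is exactly the mechanism encoded in the lemma.

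\emph{Step 1 (smooth data).} Take $(u_0,u_1)\in C_c^\infty(I)^2$. Extending $u_0,u_1$ oddly about $0$ and $1$ and $2$-periodically gives a classical $C^\infty$ solution $u$ of \eqref{eq:wave-di}, given by d'Alembert's formula, and the identities above hold pointwise. Fix $(t,x)$ in the interior with coordinates $(\widetilde y,\widetilde z)$ and follow $L_{\widetilde y}$ backwards from $(t,x)$ down to $t=0$. On each straight segment of $L_{\widetilde y}$, the invariant associated with the current direction of travel is constant: $\partial_\eta u$ on $\xi$-oriented segments, $\partial_\xi u$ on $\eta$-oriented segments. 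At each reflection point the two invariants coincide, so the value is passed on to the other invariant on the adjacent segment. Since $L_{\widetilde y}$ is $\xi$-oriented at $(t,x)$, an induction on the number of reflections gives the value at $t=0$. If $\widetilde y=[y;\xi]$, the number of reflections is even and we obtain $\partial_\eta u(0,y)$. If $\widetilde y=[y;\eta]$, the number is odd and we obtain $\partial_\xi u(0,y)$. The formula for $\partial_\xi u(t,x)$ along $L_{\widetilde z}$ is proved symmetrically. Here the exceptional set consists of the points lying on characteristics through the corners $(s,0),(s,1)$ with $s\in\{0\}\cup\{\text{reflection times}\}$, i.e.\ on the countably many lines $x\pm t\in\mathbb Z$. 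This is a null set.

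\emph{Step 2 (density).} For general data, $\partial_\xi u(0,\cdot)=\tfrac12(u_0'+u_1)$ and $\partial_\eta u(0,\cdot)=\tfrac12(u_0'-u_1)$ lie in $L^2(I)$, and by energy conservation $\partial_\xi u,\partial_\eta u\in L^2([0,T]\times I)$. Take smooth $(u_0^n,u_1^n)\to(u_0,u_1)$ in $H_0^1\times L^2$; then $\partial_{\xi,\eta}u^n\to\partial_{\xi,\eta}u$ in $L^2([0,T]\times I)$. The right-hand sides are compositions of the initial invariants with the maps $(t,x)\mapsto\widetilde y$ and $(t,x)\mapsto\widetilde z$. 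On each of finitely many pieces (separated by the lines $x\pm t\in\mathbb Z$), these maps are of the form $(t,x)\mapsto x\mp t+k$ or $(t,x)\mapsto k-(x\pm t)$, together with a constant label. Hence, for each fixed $t$, they are measure preserving onto their image. It follows that
\[
f\mapsto f\circ\widetilde y \quad\text{is bounded from } L^2(\widetilde I) \text{ to } L^2([0,T]\times I),
\]
with norm at most $\sqrt T$, and similarly for $\widetilde z$. Passing to the limit, and to an a.e.\ convergent subsequence, gives the identities off a null set $\mathcal N$. This $\mathcal N$ is the union of the null set from Step 1 and the divergence set of the subsequence.

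The main obstacle is Step 2's bookkeeping. One must make the compositions $\partial_\eta u(0,\cdot)\circ\widetilde y$ well defined as elements of $L^2$, which requires knowing that preimages of null sets are null. One must also check that the labelled-coordinate maps are piecewise affine and measure preserving uniformly in the number of reflections up to time $T$. I would handle this via the periodized odd extension, on which $\widetilde y$ corresponds simply to $x-t$ modulo $2$.
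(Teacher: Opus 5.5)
The paper gives no proof of this lemma; it calls it elementary and omits it. Your argument is correct and is the natural elementary proof. $\partial_\eta u$ is constant along $x-t=\mathrm{const}$ and $\partial_\xi u$ along $x+t=\mathrm{const}$. The Dirichlet condition forces $\partial_\xi u=\partial_\eta u$ at each reflection, and the parity of the number of reflections selects the branch. The density step, with $\|f\circ\widetilde y\|_{L^2([0,T]\times I)}\le\sqrt T\,\|f\|_{L^2(\widetilde I)}$ coming from injectivity and slope $\pm1$ of $x\mapsto\widetilde y(t,x)$ for fixed $t$, is sound.

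There is one small slip in Step 2. For $\widetilde y$ the two branches are $x-t+k$ (label $\xi$) and $k-(x-t)$ (label $\eta$); the branches involving $x+t$ belong to $\widetilde z$. As written, $k-(x\pm t)$ with the usual pairing of signs gives $\widetilde y$ the wrong one. This does not affect the slope and injectivity argument.

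You can also skip the density and composition bookkeeping by working directly with the odd $2$-periodic extension of Lemma~\ref{lem:odd-ext}, which is valid for $H^1\times L^2$ data. D'Alembert's formula gives, for a.e.\ $(t,x)$,
\[
2\partial_\xi u(t,x)=(v_0'+v_1)(x+t),\qquad 2\partial_\eta u(t,x)=(v_0'-v_1)(x-t).
\]
Since $v_0'$ is even and $v_1$ is odd, $(v_0'-v_1)(-y)=2\partial_\xi u(0,y)$ and $(v_0'+v_1)(-z)=2\partial_\eta u(0,z)$. Reading $x-t\equiv\pm y$ and $x+t\equiv\pm z \pmod 2$ through the identification $\iota$ of Proposition~\ref{prop:osc-ref}, this is the lemma verbatim. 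Your route gives the geometric picture, with the invariants swapping at each bounce, which is how the paper explains the Dirichlet/Neumann difference. The extension route gives a two-line proof.
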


The proof is elementary and is omitted. By Lemma~\ref{lem:refl-d},
\[\left.\partial_\xi u\right|_{G_0}=\left.\partial_\eta u\right|_{G_0}
=\left.\partial_\xi u\right|_{t=0,\,x\in(0,1/2)}=\left.\partial_\eta u\right|_{t=0,\,x\in(0,1/2)}
=\frac12,
\]
and
\[\left.\partial_\xi u\right|_{G_1}=\left.\partial_\eta u\right|_{G_1}
=\left.\partial_\xi u\right|_{t=0,\,x\in(1/2,1)}=\left.\partial_\eta u\right|_{t=0,\,x\in(1/2,1)}
=-\frac12.
\]
See Figure~\ref{fig:gcc-ctex-di}. Hence $\partial_tu=0$ almost everywhere on $G$, although $u$ is nonconstant.

\begin{figure}[htp]
\tikzset{every picture/.style={line width=0.75pt}}
\begin{tikzpicture}[x=0.75pt,y=0.75pt,yscale=100,xscale=100,scale=1.4]
\draw[->, thick] (0,0) -- (1.1,0) node[right] {$x$};
\draw[->, thick] (0,0) -- (0,1.1) node[above] {$t$};

\node[left=0.1cm] at (0,1) {$1$};
\node[left=0.1cm] at (0,0.5) {$\frac{1}{2}$};
\node[below=0.1cm] at (1,0) {$1$};
\node[below=0.1cm] at (0.5,0) {$\frac{1}{2}$};

\draw[lightgray, very thin] (0,0) grid (1,1);

\draw[dashed, blue] (-0.1,0.6) -- (0.6,-0.1) ;
\draw[dashed, blue] (0.4,1.1) -- (1.1,0.4) ;
\draw[dashed, red] (0.4,-0.1) -- (1.1,0.6) ;
\draw[dashed, red] (-0.1,0.4) -- (0.6,1.1) ;

\draw[thick] (0,0)--(0.5,0)--(0,0.5) -- cycle;
\fill[red!30] (0,0)--(0.5,0)--(0,0.5) -- cycle;
\node at (0.15,0.15) {$+\frac12$};

\draw[thick] (0,0.5)--(0.5,1)--(0,1) -- cycle;
\fill[blue!30] (0,0.5)--(0.5,1)--(0,1) -- cycle;
\node at (0.15,0.85) {$-\frac12$};

\draw[thick] (1,0)--(0.5,0)--(1,0.5) -- cycle;
\fill[blue!30] (1,0)--(0.5,0)--(1,0.5) -- cycle;
\node at (0.85,0.15) {$-\frac12$};

\draw[thick] (1,0.5)--(0.5,1)--(1,1) -- cycle;
\fill[red!30] (1,0.5)--(0.5,1)--(1,1) -- cycle;
\node at (0.85,0.85) {$+\frac12$};

\draw[thick] (0,0) -- (1,0) -- (1,1) -- (0,1) -- cycle;

\node[right] at (1.25,0.8) {$G = G_0 \cup G_1$};
\node[right] at (1.25,0.6) {$G_0=\{(t,x):\partial_\xi u=\partial_\eta u=+\frac12\}$, red part};
\node[right] at (1.25,0.4) {$G_1=\{(t,x):\partial_\xi u=\partial_\eta u=-\frac12\}$, blue part};
\end{tikzpicture}
    \caption{Counterexample under Dirichlet boundary conditions}
    \label{fig:gcc-ctex-di}
\end{figure}

\subsection{New notations and definitions}\label{subsec:new-def-d}

We now explain the mechanism behind the counterexample. For $\widetilde A\subset\widetilde I$, define
\begin{align*} L_{\widetilde A}&=\bigcup_{\widetilde x\in\widetilde A}L_{\widetilde x},\\
\M_{\widetilde A}&=\bigl\{(t,x)\in[0,\infty)\times I:\text{there are distinct }\widetilde y,\widetilde z\in\widetilde A\\
&\hspace{42mm}\text{such that }(t,x)\in L_{\widetilde y}\cap L_{\widetilde z}\bigr\}.
\end{align*}

Thus, $L_{\widetilde A}$ is the union of the characteristic lines starting from $\widetilde A$, whereas $\M_{\widetilde A}$ consists of the points whose two generalized coordinates both belong to $\widetilde A$.
Define
\[ L_{[x;\xi]}(s)=L_{[x;\xi]}\cap\{t=s\}, \qquad L_{[x;\eta]}(s)=L_{[x;\eta]}\cap\{t=s\}. \]
For a set $\widetilde A\subset\widetilde I$, also set $L_{\widetilde A}(s)=L_{\widetilde A}\cap\{t=s\}$ and $\M_{\widetilde A}(\tau)=\M_{\widetilde A}\cap\{t\leq\tau\}$. We introduce the following geometric symmetry condition in our setting.
\begin{definition}
Let $\widetilde A\subset\widetilde I$ be measurable, and let $0\leq\tau\leq T$. A set $G\subset[0,T]\times I$ is said to be $(\widetilde A;\tau)$-observable symmetric if
    \begin{equation}\label{eq:obs-sym-di} \meas_{\R^2}\!\left(G\setminus\left(\M_{\widetilde A}(\tau)\cup
\M_{\widetilde I\setminus\widetilde A}(\tau)\right)\right)=0.
\end{equation} 
Equivalently, almost every point of $G$ belongs either to $\M_{\widetilde A}$ or to $\M_{\widetilde I\setminus\widetilde A}$.
\end{definition}

A set $\widetilde A$ is called trivial if $|\widetilde A|=0$ or $|\widetilde A|=2$; otherwise, it is called nontrivial.

\begin{definition}\label{def:gcc}
Let $T>0$. A measurable set $G\subset[0,T]\times I$ satisfies \emph{(GCC)} if there is a constant $c_0>0$ such that, for almost every $x\in[0,1]$,
\begin{align*} &\mathcal H^1\!\left(G\cap\{L_{[x;\eta]}(s):s\in[0,T]\}\right)\geq c_0,\\
&\mathcal H^1\!\left(G\cap\{L_{[x;\xi]}(s):s\in[0,T]\}\right)\geq c_0. \end{align*}
\end{definition}

\begin{definition}[Observable symmetry condition in the Dirichlet setting]\label{def:osc-d}
Let $T>0$. A measurable set $G\subset[0,T]\times I$ satisfies \emph{(OSC-D)} if, for every nontrivial measurable set $\widetilde A\subset\widetilde I$, the set $G$ is not $(\widetilde A;T)$-observable symmetric.
\end{definition}
A schematic illustration of (OSC-D) is shown in Figure~\ref{fig:osc-di}.

\begin{figure}[htp]
\tikzset{every picture/.style={line width=0.75pt}} 
\begin{tikzpicture}[x=0.75pt,y=0.75pt,yscale=-1,xscale=1,scale=1.4]
\draw  (80,320) -- (280,320)(99.87,160) -- (99.87,320) (273,315) -- (280,320) -- (273,325) (94.87,167) -- (99.87,160) -- (104.87,167)  ;
\draw  [fill={rgb, 255:red, 212; green, 212; blue, 212 }  ,fill opacity=1 ] (100,180) -- (260,180) -- (260,320) -- (100,320) -- cycle ;
\draw  [dash pattern={on 4.5pt off 4.5pt}]  (100,330) -- (260,330) ;
\draw  [dash pattern={on 4.5pt off 4.5pt}]  (100,340) -- (260,340) ;
\draw  [color={rgb, 255:red, 74; green, 144; blue, 226 }  ,draw opacity=1 ][fill={rgb, 255:red, 74; green, 144; blue, 226 }  ,fill opacity=1 ] (120,328) -- (140,328) -- (140,332) -- (120,332) -- cycle ;
\draw  [color={rgb, 255:red, 74; green, 144; blue, 226 }  ,draw opacity=1 ][fill={rgb, 255:red, 74; green, 144; blue, 226 }  ,fill opacity=1 ] (170,328) -- (210,328) -- (210,332) -- (170,332) -- cycle ;
\draw  [color={rgb, 255:red, 208; green, 2; blue, 27 }  ,draw opacity=1 ][fill={rgb, 255:red, 208; green, 2; blue, 27 }  ,fill opacity=1 ] (190,338) -- (230,338) -- (230,342) -- (190,342) -- cycle ;
\draw  [color={rgb, 255:red, 74; green, 144; blue, 226 }  ,draw opacity=1 ][fill={rgb, 255:red, 255; green, 255; blue, 255 }  ,fill opacity=1 ] (100,300) -- (100,280) -- (140,320) -- (120,320) -- cycle ;
\draw  [color={rgb, 255:red, 74; green, 144; blue, 226 }  ,draw opacity=1 ][fill={rgb, 255:red, 255; green, 255; blue, 255 }  ,fill opacity=1 ] (220,180) -- (100,300) -- (100,280) -- (200,180) -- cycle ;
\draw  [color={rgb, 255:red, 74; green, 144; blue, 226 }  ,draw opacity=1 ][fill={rgb, 255:red, 255; green, 255; blue, 255 }  ,fill opacity=1 ] (100,210) -- (210,320) -- (170,320) -- (100,250) -- cycle ;
\draw  [color={rgb, 255:red, 74; green, 144; blue, 226 }  ,draw opacity=1 ][fill={rgb, 255:red, 255; green, 255; blue, 255 }  ,fill opacity=1 ] (130,180) -- (170,180) -- (100,250) -- (100,210) -- cycle ;
\draw  [color={rgb, 255:red, 0; green, 0; blue, 0 }  ,draw opacity=1 ][fill={rgb, 255:red, 126; green, 211; blue, 33 }  ,fill opacity=1 ] (120,230) -- (100,250) -- (100,210) -- cycle ;
\draw  [color={rgb, 255:red, 208; green, 2; blue, 27 }  ,draw opacity=1 ][fill={rgb, 255:red, 255; green, 255; blue, 255 }  ,fill opacity=1 ] (260,250) -- (260,290) -- (230,320) -- (190,320) -- cycle ;
\draw  [color={rgb, 255:red, 208; green, 2; blue, 27 }  ,draw opacity=1 ][fill={rgb, 255:red, 255; green, 255; blue, 255 }  ,fill opacity=1 ] (190,180) -- (260,250) -- (260,290) -- (150,180) -- cycle ;
\draw  [color={rgb, 255:red, 0; green, 0; blue, 0 }  ,draw opacity=1 ][fill={rgb, 255:red, 126; green, 211; blue, 33 }  ,fill opacity=1 ] (110,290) -- (100,300) -- (100,280) -- cycle ;
\draw  [color={rgb, 255:red, 0; green, 0; blue, 0 }  ,draw opacity=1 ][fill={rgb, 255:red, 126; green, 211; blue, 33 }  ,fill opacity=1 ] (260,290) -- (260,250) -- (240,270) -- cycle ;
\draw  [color={rgb, 255:red, 0; green, 0; blue, 0 }  ,draw opacity=1 ][fill={rgb, 255:red, 65; green, 117; blue, 5 }  ,fill opacity=1 ] (170,180) -- (160,190) -- (150,180) -- cycle ;
\draw  [color={rgb, 255:red, 0; green, 0; blue, 0 }  ,draw opacity=1 ][fill={rgb, 255:red, 65; green, 117; blue, 5 }  ,fill opacity=1 ] (210,320) -- (200,310) -- (190,320) -- cycle ;
\draw  [color={rgb, 255:red, 0; green, 0; blue, 0 }  ,draw opacity=1 ][fill={rgb, 255:red, 65; green, 117; blue, 5 }  ,fill opacity=1 ] (145,255) -- (125,275) -- (115,265) -- (135,245) -- cycle ;
\draw  [color={rgb, 255:red, 0; green, 0; blue, 0 }  ,draw opacity=1 ][fill={rgb, 255:red, 65; green, 117; blue, 5 }  ,fill opacity=1 ][line width=0.75]  (205,195) -- (185,215) -- (175,205) -- (195,185) -- cycle ;

\draw (98,150) node [anchor=north west][inner sep=0.75pt]   [align=left] {$t$};
\draw (285,320) node [anchor=north west][inner sep=0.75pt]   [align=left] {$x$};
\draw (290,195) node [anchor=north west][inner sep=0.75pt]   [align=left] {$\widetilde{A}$ is the union of blue and red blocks.};
\draw (290,220) node [anchor=north west][inner sep=0.75pt]   [align=left] {$\M_{\widetilde{A}}(T)$ is the union of \\ dark green and light green parts.};
\draw (290,250) node [anchor=north west][inner sep=0.75pt]   [align=left] {$\M_{\widetilde{I}\backslash\widetilde{A}}(T)$ is the gray part.};
\draw (290,275) node [anchor=north west][inner sep=0.75pt]   [align=left] {If $G$ is a subset of $\M_{\widetilde{A}}(T)\cup \M_{\widetilde{I}\backslash\widetilde{A}}(T)$, \\ $G$ is $(\widetilde{A};T)$-observable symmetric.};
\draw (100,349) node [anchor=north west][inner sep=0.75pt]   [align=left] {The upper black dashed line segment represents $[I;\eta]$;};
\draw (100,364) node [anchor=north west][inner sep=0.75pt]   [align=left] {The lower black dashed line segment represents $[I;\xi]$.};
\end{tikzpicture}

    \caption{OSC in the Dirichlet setting}
    \label{fig:osc-di}
\end{figure}

Returning to Section~\ref{subsec:ctex-d}, let
\[ \widetilde A=[I_0;\xi]\cup[I_0;\eta]\subset\widetilde I, \qquad I_0=\left[0,\frac12\right]. \]
Then $G$ is $(\widetilde A;1)$-observable symmetric: more precisely, $G_0\subset\M_{\widetilde A}(1)$ and $G_1\subset\M_{\widetilde I\setminus\widetilde A}(1)$. This explains why observability~\eqref{eq:obs} fails although $G$ satisfies (GCC).

Finally, we introduce the geometric assumption used in the discussion of UCP.
\begin{definition}\label{def:w-gcc}
Let $T>0$. A measurable set $G\subset[0,T]\times I$ satisfies \emph{(Weak GCC)} if, for almost every $x\in[0,1]$,
\begin{align*} &\meas_{\R}\left(G\cap\{L_{[x;\eta]}(s):s\in[0,T]\}\right)>0,\\
&\meas_{\R}\left(G\cap\{L_{[x;\xi]}(s):s\in[0,T]\}\right)>0. \end{align*}
\end{definition}

It follows directly from the definitions that (GCC) implies (Weak GCC).

\subsection{Comparison with the torus setting}
We now compare the observable symmetry conditions in the Dirichlet and torus settings.

\subsubsection{Observable symmetry condition on the torus}
We recall the observable symmetry condition on the torus from~\cite{NiuWangXiang2026}. Let $T>0$, let $\Ttwopi=\R/(2\pi\Z)$, and let $G\subset[0,T]\times\Ttwopi$ be measurable. Consider
\begin{equation}\label{eq:wave-tor-2pi} (\partial_t^2-\partial_x^2)v=0, \qquad (v,\partial_tv)|_{t=0}=(v_0,v_1)\in H^1(\Ttwopi)\times L^2(\Ttwopi).
\end{equation}

\begin{figure}[htp]
    \centering
\tikzset{every picture/.style={line width=0.75pt}} 
\begin{tikzpicture}[x=0.75pt,y=0.75pt,yscale=30,xscale=30,scale=0.9]
  \draw[->] (-0.2,0) -- (6.2,0) node[right] {$x$};
  \draw[->] (0,-0.2) -- (0,6.2) node[above] {$t$};
  \draw (3,0) node[below] {$\pi$} -- (3,0.1);
  \draw (6,0) node[below] {$2\pi$} -- (6,0.1);
  \draw (0,3) node[left] {$\pi$} -- (0.1,3);
  \draw (0,6) node[left] {$2\pi$} -- (0.1,6);
  \draw (0,0) -- (6,0) -- (6,6) -- (0,6) -- cycle;
\fill[blue!30] (0,0) coordinate (A1) -- (3,0) coordinate (B1) -- (1.5,1.5) coordinate (C1)-- cycle;
\node at (barycentric cs:A1=1,B1=1,C1=1) {\textcolor{white}{-1}};
\fill[red!30] (3,0) coordinate (A2) -- (6,0) coordinate (B2) -- (4.5,1.5) coordinate (C2)-- cycle;
\node at (barycentric cs:A2=1,B2=1,C2=1) {\textcolor{white}{1}};
\fill[red!30] (0,3) coordinate (A3) -- (1.5,1.5) coordinate (B3) -- (3,3) coordinate (C3)--(1.5,4.5) coordinate (D3)-- cycle;
\node at (barycentric cs:A3=1,B3=1,C3=1,D3=1) {\textcolor{white}{1}};
\fill[blue!30] (3,3) coordinate (A4) -- (4.5,1.5) coordinate (B4) -- (6,3) coordinate (C4)--(4.5,4.5) coordinate (D4)-- cycle;
\node at (barycentric cs:A4=1,B4=1,C4=1,D4=1) {\textcolor{white}{-1}};
\fill[blue!30] (0,6) coordinate (A5) -- (1.5,4.5) coordinate (B5) -- (3,6) coordinate (C5)-- cycle;
\node at (barycentric cs:A5=1,B5=1,C5=1) {\textcolor{white}{-1}};
\fill[red!30] (3,6) coordinate (A6) -- (4.5,4.5) coordinate (B6) -- (6,6) coordinate (C6)-- cycle;
\node at (barycentric cs:A6=1,B6=1,C6=1) {\textcolor{white}{1}};
  \draw[dashed] (-0.5,3.5) -- (3.5,-0.5); 
  \draw[dashed] (0,6) -- (6,0); 
  \draw[dashed] (2.5,6.5) -- (6.5,2.5); 
  \draw[dashed] (-0.5,2.5) -- (3.5,6.5); 
  \draw[dashed] (0,0) -- (6,6); 
  \draw[dashed] (2.5,-0.5) -- (6.5,3.5); 
\node at (14, 5) {$G=G_{1}\cup G_{2}$};  
\node at (14, 4) {$G_{1}=\{(t,x):\partial_\xi v=\partial_\eta v=-1\}$: blue part};
\node at (14, 3) {$G_2=\{(t,x):\partial_\xi v=\partial_\eta v=1\}$: red part};
\end{tikzpicture}
    \caption{Observability fails on $G$, although $G$ satisfies \textnormal{(GCC)}.}
    \label{fig:gcc-ctex-tor}
\end{figure}

For measurable sets $A,B\subset\Ttwopi$, define the characteristic cylinders
\begin{equation}\label{eq:cyl-tor} L_{\xi\in A}:=\bigcup_{\xi_0\in A}L_{\xi=\xi_0},
\qquad
L_{\eta\in B}:=\bigcup_{\eta_0\in B}L_{\eta=\eta_0}.
\end{equation}
Then $G\subset[0,T]\times\Ttwopi$ is said to be $(A,B)$-\emph{observable symmetric} if
\begin{equation}\label{eq:obs-sym-tor} \meas_{\R^2}\!\left([G\cap L_{\xi\in A}]\mathbin\Delta [G\cap L_{\eta\in B}]\right)=0,
\end{equation} 
where $X\mathbin\Delta Y=(X\setminus Y)\cup(Y\setminus X)$ is the symmetric difference.A pair $(A,B)$ is called trivial if either $|A|=|B|=0$ or $|A|=|B|=2\pi$; otherwise, it is called nontrivial.

\begin{definition}[Observable symmetry condition on the torus]\label{def:osc-t}
Let $T>0$. A set $G\subset[0,T]\times\Ttwopi$ satisfies \emph{(OSC-T)} if, for every nontrivial pair $(A,B)$, the set $G$ is not $(A,B)$-observable symmetric.
\end{definition}
In the counterexample shown in Figure~\ref{fig:gcc-ctex-tor}, the set $G$ is $(A,B)$-observable symmetric for $A=B=(0,\pi)$ and for $A=B=(\pi,2\pi)$.

\subsubsection{Geometric differences}\label{subsec:diff-d}
We can now compare the geometric conditions in the torus and Dirichlet settings. As Figure~\ref{fig:osc-di} shows, the light-green region is covered by an argument analogous to the torus case, whereas the dark-green region is not. The obstruction is the so-called \emph{boundary triangle}.

\begin{figure}[htp]
\tikzset{every picture/.style={line width=0.75pt}} 
\begin{tikzpicture}[x=0.75pt,y=0.75pt,yscale=-1,xscale=1,scale=1]

\draw  (80,320) -- (280,320)(99.87,135.83) -- (99.87,320) (273,315) -- (280,320) -- (273,325) (94.87,142.83) -- (99.87,135.83) -- (104.87,142.83)  ;
\draw  [fill={rgb, 255:red, 212; green, 212; blue, 212 }  ,fill opacity=1 ] (100,160) -- (260,160) -- (260,320) -- (100,320) -- cycle ;
\draw  (360,319.97) -- (560,319.97)(379.87,135.8) -- (379.87,319.97) (553,314.97) -- (560,319.97) -- (553,324.97) (374.87,142.8) -- (379.87,135.8) -- (384.87,142.8)  ;
\draw  [color={rgb, 255:red, 0; green, 0; blue, 0 }  ,draw opacity=1 ][fill={rgb, 255:red, 212; green, 212; blue, 212 }  ,fill opacity=1 ] (380,160) -- (540,160) -- (540,320) -- (380,320) -- cycle ;
\draw  [color={rgb, 255:red, 208; green, 2; blue, 27 }  ,draw opacity=1 ][fill={rgb, 255:red, 255; green, 255; blue, 255 }  ,fill opacity=1 ] (260,240) -- (180,320) -- (130,320) -- (260,190) -- cycle ;
\draw  [color={rgb, 255:red, 208; green, 2; blue, 27 }  ,draw opacity=1 ][fill={rgb, 255:red, 255; green, 255; blue, 255 }  ,fill opacity=1 ] (130,160) -- (100,190) -- (100,240) -- (180,160) -- cycle ;
\draw  [color={rgb, 255:red, 208; green, 2; blue, 27 }  ,draw opacity=1 ][fill={rgb, 255:red, 255; green, 255; blue, 255 }  ,fill opacity=1 ] (540,240) -- (460,320) -- (411.17,319.42) -- (540,190) -- cycle ;
\draw  [color={rgb, 255:red, 208; green, 2; blue, 27 }  ,draw opacity=1 ][fill={rgb, 255:red, 255; green, 255; blue, 255 }  ,fill opacity=1 ] (510,160) -- (460,160) -- (540,240) -- (540,190) -- cycle ;
\draw  [color={rgb, 255:red, 208; green, 2; blue, 27 }  ,draw opacity=1 ][fill={rgb, 255:red, 208; green, 2; blue, 27 }  ,fill opacity=1 ] (515,215) -- (540,240) -- (540,190) -- cycle ;

\draw (96.36,117.6) node [anchor=north west][inner sep=0.75pt]   [align=left] {$t$};
\draw (286.44,311.19) node [anchor=north west][inner sep=0.75pt]   [align=left] {$x$};
\draw (376.36,117) node [anchor=north west][inner sep=0.75pt]   [align=left] {$t$};
\draw (566.44,310.59) node [anchor=north west][inner sep=0.75pt]   [align=left] {$x$};
\draw (164,330) node [anchor=north west][inner sep=0.75pt]   [align=left] {$\Ttwopi$};
\draw (436,330) node [anchor=north west][inner sep=0.75pt]   [align=left] {$I=[0,1]$};
\end{tikzpicture}
    \caption{Boundary triangle}
    \label{fig:diff-tor-di}
\end{figure}

On the torus, intersections occur only between an $\eta$-characteristic and a $\xi$-characteristic. On an interval, reflections allow two characteristics that initially have the same orientation to intersect. Thus, when $G$ is contained in a boundary triangle, the observability inequality may fail.

\section{Unique continuation in the Dirichlet setting}\label{sec:ucp-d}
The unique continuation property (UCP) is central to the observability inequality~\eqref{eq:obs}. In this section, we study its relationship with \refinblack{def:gcc}{\textnormal{(GCC)}} and \refinblack{def:osc-d}{\textnormal{(OSC-D)}}. Following the strategy of~\cite{NiuWangXiang2026}, we prove the following implications:
\begin{itemize}
\item UCP implies Weak GCC (Proposition~\ref{prop:ucp-wgcc}).
\item UCP implies OSC-D (Proposition~\ref{prop:ucp-osc-d}).
\item Weak GCC and OSC-D imply UCP (Proposition~\ref{prop:ucp-d}).
\end{itemize}

The argument differs from the torus case in the construction of the counterexample and in the encoding of the initial characteristic derivatives $(\partial_\xi u,\partial_\eta u)|_{t=0}$ by a function $f$ on $\widetilde I$.

\begin{proposition}\label{prop:ucp-wgcc}
Let $T>0$, and let $G\subset[0,T]\times I$ be measurable. If UCP holds on $G$ for the Dirichlet problem~\eqref{eq:wave-di} (resp. the Neumann problem~\eqref{eq:wave-ne}), then $G$ satisfies \refinblack{def:w-gcc}{\textnormal{(Weak GCC)}}.
\end{proposition}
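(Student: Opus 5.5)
We argue by contraposition: assuming (Weak GCC) fails, we construct a nontrivial solution whose time derivative vanishes a.e. on $G$. By the definition of (Weak GCC), there is a set $E\subset I$ of positive measure and a label $\sigma\in\{\xi,\eta\}$ such that for every $x\in E$ the line $L_{[x;\sigma]}$ meets $G$ in a set of zero one-dimensional measure. The set $\{(x,\sigma): \meas_{\R}(G\cap L_{[x;\sigma]})=0\}$ is measurable by Fubini in characteristic coordinates. Hence we may take $E$ measurable, and after shrinking, we may assume $|E|>0$ and $E\subset(\delta,1-\delta)$. Set $\widetilde E=[E;\sigma]\subset\widetilde I$. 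Fubini along characteristics, whose change of variables is piecewise affine with unit Jacobian up to the normalization in \eqref{eq:char-der}, then gives $\meas_{\R^2}(G\cap L_{\widetilde E})=0$.

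Next we encode initial data by a function on $\widetilde I$. Let $f\in L^2(\widetilde I)$ be supported in $\widetilde E$, and prescribe the initial characteristic derivatives by $\partial_\xi u(0,x)=f([x;\xi])$ and $\partial_\eta u(0,x)=f([x;\eta])$. This means $\partial_x u_0=f_\xi+f_\eta$ and $u_1=f_\xi-f_\eta$. For the Dirichlet problem we also need $u_0\in H_0^1$, that is $\int_0^1\partial_xu_0\,\dd x=0$. We therefore choose $f=g$ on $[E;\sigma]$ with $g\in L^2(E)$, $\int_E g=0$ and $g\not\equiv0$; this is possible since $|E|>0$, for example by splitting $E$ into two halves of equal measure and taking values $\pm1$. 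For the Neumann problem no compatibility condition is needed, and $u_0=\int_0^x\partial_xu_0$. In both cases $(u_0,u_1)\neq(0,0)$ modulo constants: since $f$ is supported on one label only, $\partial_xu_0$ and $u_1$ equal $\pm g$ and are nonzero. Hence $u$ is neither zero nor constant.

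By Lemma~\ref{lem:refl-d} (and its Neumann analogue, where reflection only introduces a sign), at a.e. $(t,x)$ with coordinates $(\widetilde y,\widetilde z)$ the values $\partial_\eta u(t,x)$ and $\partial_\xi u(t,x)$ equal $\pm f(\widetilde y)$ and $\pm f(\widetilde z)$ respectively. Indeed, the lemma transports the initial characteristic derivative labelled by the starting point. Thus $\partial_t u=\partial_\xi u-\partial_\eta u$ vanishes a.e. outside $L_{\widetilde E}$, and since $G\cap L_{\widetilde E}$ is null, $\partial_tu=0$ a.e. on $G$. This contradicts UCP.

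The main technical point is the measure-theoretic step: converting "almost every line from $\widetilde E$ meets $G$ in a null set" into $\meas_{\R^2}(G\cap L_{\widetilde E})=0$, and ensuring measurability of $E$. We handle it by splitting $[0,T]\times I$ into finitely many pieces between successive reflection times. On each piece the map $(\widetilde y,s)\mapsto L_{\widetilde y}(s)$ is an affine bijection onto its image, so Fubini–Tonelli applies directly.
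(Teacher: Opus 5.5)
Your strategy matches the paper's: put a nonzero characteristic datum on the positive-measure family of rays that misses $G$, so that $\partial_tu$ vanishes off those rays. But the transport step fails as written, because your encoding attaches the datum to the wrong family of rays.

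Since $\partial_\eta\partial_\xi u=\frac14(\partial_x^2-\partial_t^2)u=0$, each characteristic derivative is constant along the \emph{other} family. This is exactly what Lemma~\ref{lem:refl-d} says: the ray $L_{[y;\xi]}$ carries $\partial_\eta u(0,y)$, and the ray $L_{[y;\eta]}$ carries $\partial_\xi u(0,y)$ (up to sign in Lemma~\ref{lem:refl-n}). Your prescription is $\partial_\xi u(0,x)=f([x;\xi])$, $\partial_\eta u(0,x)=f([x;\eta])$. Under it, the ray $L_{[y;\sigma]}$ carries $\pm f([y;\sigma'])$, where $\sigma'$ is the opposite label. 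So your claim $\partial_\eta u(t,x)=\pm f(\widetilde y)$, $\partial_\xi u(t,x)=\pm f(\widetilde z)$ is false. A datum supported on $[E;\sigma]$ travels along $L_{[E;\sigma']}$, and $\partial_tu$ vanishes off $L_{[E;\sigma']}$, not off $L_{\widetilde E}$. The failure of (Weak GCC) gives no control of $G\cap L_{[E;\sigma']}$.

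Here is a concrete instance. Let $E=(a,b)\subset(\delta,1-\delta)$ with $b-a<2T<2\delta$, and let $G=([0,T]\times I)\setminus L_{[E;\xi]}$.
\begin{itemize}
\item The $\xi$-rays from $E$ miss $G$.
\item Every $\eta$-ray meets $G$ in positive measure, since it spends time at most $(b-a)/2<T$ in the strip $L_{[E;\xi]}$. So $[E;\xi]$, or a positive-measure subset of it, is the only admissible choice.
\item Your data are $\partial_\xi u(0,\cdot)=g\chi_E$ and $\partial_\eta u(0,\cdot)=0$. These give $\partial_\eta u\equiv0$ and $\partial_tu(t,x)=g(x+t)\chi_E(x+t)$ on $[0,T]\times I$.
\item This is nonzero on $\{(t,x):x+t\in E,\ g(x+t)\neq0,\ t>(b-a)/2\}$, a positive-measure subset of $G$.
\end{itemize}

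The repair is to use the paper's crossed correspondence \eqref{eq:init-cor-di}: $\partial_\xi u(0,x)=f([x;\eta])$ and $\partial_\eta u(0,x)=f([x;\xi])$. In the Neumann case you may also insert the sign from \eqref{eq:init-cor-ne}; it is irrelevant here. Then $L_{\widetilde x}$ carries $\pm f(\widetilde x)$, so $\partial_\xi u$ and $\partial_\eta u$ both vanish at every point whose coordinates avoid $\widetilde E$. Since $f$ still lives on a single label, $\partial_xu_0$ and $u_1$ are still $\pm g\chi_E$. So your mean-zero condition and your nontriviality argument are unchanged. The rest of your proof also goes through: the measurability of $E$ and the Fubini computation on the finitely many affine pieces.

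With that correction, your proof is a self-contained interval version of the paper's. The paper instead unfolds to $\Ttwo$, places the datum there, and takes the odd (resp.\ even) extension, citing \cite{NiuWangXiang2026}. Your route avoids the torus and makes the measure-theoretic step explicit; the paper's route reuses the existing torus construction.
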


\begin{proof}
If Weak GCC fails, unfold the interval to $\Ttwo$ and choose a nonzero mean-zero characteristic datum supported on the positive-measure family of rays which misses $G$; its odd (resp. even) period-$2$ extension gives a nonzero (resp. nonconstant) solution with $\partial_tu=0$ almost everywhere on $G$, exactly as in Section~4.1, in particular Proposition~4.2, of~\cite{NiuWangXiang2026}.
\end{proof}

\begin{proposition}\label{prop:ucp-osc-d}
Let $T>0$, and let $G\subset[0,T]\times I$ be measurable. Assume that every solution $u$ of~\eqref{eq:wave-di} satisfies
\[ \partial_tu=0\quad\text{a.e. on }G \quad\Longrightarrow\quad u=0. \]
Then $G$ satisfies \refinblack{def:osc-d}{\textnormal{(OSC-D)}}.
\end{proposition}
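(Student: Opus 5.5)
We argue by contraposition: assuming $G$ is $(\widetilde A;T)$-observable symmetric for some nontrivial measurable $\widetilde A\subset\widetilde I$, we construct a nonzero solution of \eqref{eq:wave-di} with $\partial_tu=0$ a.e. on $G$. The construction mirrors the counterexample of Section~\ref{subsec:ctex-d}. There $\widetilde A=[I_0;\xi]\cup[I_0;\eta]$, and the initial characteristic derivatives were constant on $\widetilde A$ and on its complement.

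\textbf{Encoding the data.} We encode the initial characteristic derivatives by a function $f$ on $\widetilde I$, setting $f([x;\xi])=\partial_\eta u(0,x)$ and $f([x;\eta])=\partial_\xi u(0,x)$. This is the pairing suggested by Lemma~\ref{lem:refl-d}: the value carried along $L_{[x;\xi]}$ is $\partial_\eta u(0,x)$, and the value carried along $L_{[x;\eta]}$ is $\partial_\xi u(0,x)$. By Lemma~\ref{lem:refl-d}, for a.e. $(t,x)$ with coordinates $(\widetilde y,\widetilde z)$ we then have
\[
\partial_\eta u(t,x)=f(\widetilde y),\qquad \partial_\xi u(t,x)=f(\widetilde z),
\]
and hence $\partial_tu(t,x)=f(\widetilde y)-f(\widetilde z)$.

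\textbf{Choice of $f$.} We take $f=a\chi_{\widetilde A}+b\chi_{\widetilde I\setminus\widetilde A}$ with constants $a\neq b$. Every point of $\M_{\widetilde A}(T)\cup\M_{\widetilde I\setminus\widetilde A}(T)$ has both coordinates in $\widetilde A$ or both in $\widetilde I\setminus\widetilde A$, so $\partial_tu=0$ there. By observable symmetry, $\partial_tu=0$ a.e. on $G$, since the null set $\mathcal N$ of Lemma~\ref{lem:refl-d} is harmless.

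\textbf{Admissibility of the data.} Given $f$, we set
\[
u_1=\partial_tu(0,\cdot)=f([\cdot;\eta])-f([\cdot;\xi])\in L^2,\qquad \partial_xu_0=f([\cdot;\xi])+f([\cdot;\eta]).
\]
For $u_0\in H^1_0(I)$ we need $\int_0^1\partial_xu_0\,dx=0$, that is,
\[
a|\widetilde A|+b(2-|\widetilde A|)=0.
\]
Since $\widetilde A$ is nontrivial, $0<|\widetilde A|<2$, so we may take $a=2-|\widetilde A|$ and $b=-|\widetilde A|$. These are nonzero with opposite signs, hence $a\neq b$. Then $u_0(x)=\int_0^x\partial_xu_0$ vanishes at both endpoints, and $(u_0,u_1)\in H^1_0\times L^2$.

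\textbf{Nontriviality.} Suppose $u\equiv 0$. Then $\partial_xu_0=0$ and $u_1=0$ a.e., so $f([x;\xi])=f([x;\eta])=0$ a.e. This contradicts $f\in\{a,b\}$ with $a,b\neq0$. Hence $u\neq0$ while $\partial_tu=0$ a.e. on $G$, contradicting the hypothesis, so $G$ satisfies (OSC-D).

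\textbf{Main obstacle.} The main difficulty is justifying Lemma~\ref{lem:refl-d} in the form used above, with the correct pairing and no sign flip at the boundary. The Dirichlet reflection $u(t,0)=0$ gives $\partial_tu=0$ at the wall, so $\partial_\xi u=\partial_\eta u$ there, and the derivative is transferred without a sign change. We also need the identity to hold for the weak solution associated with merely $L^2$ data. We handle this by density: approximate $f$ by smooth compatible data, use the d'Alembert formula for the odd periodic extension, and pass to the limit in $L^2_{t,x}$. Because the coordinate maps are measure-preserving, $L^2$ convergence on $\widetilde I$ gives a.e. convergence along a subsequence.
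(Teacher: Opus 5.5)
Your proof is correct and follows the paper's argument essentially verbatim: the same encoding $f([x;\xi])=\partial_\eta u(0,x)$, $f([x;\eta])=\partial_\xi u(0,x)$, and the same two-valued $f$ (yours is the paper's $\chi_{\widetilde A}+c_A\chi_{\widetilde I\setminus\widetilde A}$ multiplied by $2-|\widetilde A|$). You also use the same zero-mean condition to get $u_0\in H_0^1(I)$, and add explicit nontriviality and density remarks that the paper leaves implicit. One harmless slip: since $\partial_tu=\partial_\xi u-\partial_\eta u$, the correct formula is $\partial_tu=f(\widetilde z)-f(\widetilde y)$, which changes nothing about its vanishing on $\M_{\widetilde A}\cup\M_{\widetilde I\setminus\widetilde A}$.
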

\begin{proof}
We argue by contradiction. Suppose that, for some nontrivial set $\widetilde A$, the set $G$ is $(\widetilde A;T)$-observable symmetric. Then, as shown in Figure~\ref{fig:ucp-osc-di},
\begin{equation}\label{eq:ucp-sym-di} G\subset\M_{\widetilde A}(T)\cup
\M_{\widetilde I\setminus\widetilde A}(T)
\quad\text{up to a null set}.
\end{equation}
\definecolor{ffqqqq}{rgb}{1.,0.,0.}
\definecolor{wwccff}{rgb}{0.4,0.8,1.}
\definecolor{zzttqq}{rgb}{0.6,0.2,0.}
\begin{figure}[htp]
\tikzset{every picture/.style={line width=0.75pt}} 
\begin{tikzpicture}[x=0.75pt,y=0.75pt,yscale=-1,xscale=1,scale=1.4]

\draw  [fill={rgb, 255:red, 74; green, 144; blue, 226 }  ,fill opacity=1 ] (140,100) -- (340,100) -- (340,200) -- (140,200) -- cycle ;
\draw  [fill={rgb, 255:red, 255; green, 255; blue, 255 }  ,fill opacity=1 ][dash pattern={on 3.75pt off 3pt}] (300,100) -- (200,200) -- (180,200) -- (280,100) -- cycle ;
\draw  [fill={rgb, 255:red, 255; green, 255; blue, 255 }  ,fill opacity=1 ][dash pattern={on 3.75pt off 3pt}] (220,100) -- (320,200) -- (300,200) -- (200,100) -- cycle ;
\draw  [fill={rgb, 255:red, 255; green, 255; blue, 255 }  ,fill opacity=1 ][dash pattern={on 3.75pt off 3pt}] (340,180) -- (320,200) -- (300,200) -- (340,160) -- cycle ;
\draw  [fill={rgb, 255:red, 255; green, 255; blue, 255 }  ,fill opacity=1 ][dash pattern={on 3.75pt off 3pt}] (340,160) -- (280,100) -- (260,100) -- (340,180) -- cycle ;
\draw  [fill={rgb, 255:red, 208; green, 2; blue, 27 }  ,fill opacity=1 ][dash pattern={on 3.75pt off 3pt on 7.5pt off 1.5pt}] (260,140) -- (250,150) -- (240,140) -- (250,130) -- cycle ;
\draw  [fill={rgb, 255:red, 208; green, 2; blue, 27 }  ,fill opacity=1 ][dash pattern={on 3.75pt off 3pt}] (290,110) -- (280,120) -- (270,110) -- (280,100) -- cycle ;
\draw  [fill={rgb, 255:red, 208; green, 2; blue, 27 }  ,fill opacity=1 ][dash pattern={on 3.75pt off 3pt}] (310,190) -- (320,200) -- (300,200) -- cycle ;
\draw  [fill={rgb, 255:red, 208; green, 2; blue, 27 }  ,fill opacity=1 ][dash pattern={on 3.75pt off 3pt}] (340,180) -- (330,170) -- (340,160) -- cycle ;
\draw  [dash pattern={on 4.5pt off 4.5pt}]  (140,210) -- (340,210) ;
\draw  [dash pattern={on 4.5pt off 4.5pt}]  (140,220) -- (340,220) ;
\draw  [color={rgb, 255:red, 74; green, 144; blue, 226 }  ,draw opacity=1 ][fill={rgb, 255:red, 74; green, 144; blue, 226 }  ,fill opacity=1 ] (300,208) -- (320,208) -- (320,212) -- (300,212) -- cycle ;
\draw  [color={rgb, 255:red, 208; green, 2; blue, 27 }  ,draw opacity=1 ][fill={rgb, 255:red, 208; green, 2; blue, 27 }  ,fill opacity=1 ] (300,218) -- (320,218) -- (320,222) -- (300,222) -- cycle ;
\draw  [color={rgb, 255:red, 208; green, 2; blue, 27 }  ,draw opacity=1 ][fill={rgb, 255:red, 208; green, 2; blue, 27 }  ,fill opacity=1 ] (180,217.99) -- (200,217.99) -- (200,221.99) -- (180,221.99) -- cycle ;

\draw (135,230) node [anchor=north west][inner sep=0.75pt]   [align=left] {The upper black dashed line segment  represents $[I;\eta]$,};
\draw (135,245) node [anchor=north west][inner sep=0.75pt]   [align=left] {The lower black dashed line segment represents $[I;\xi]$.};

\end{tikzpicture}
    \caption{The set $G$ lies in the blue and red regions, while $\widetilde A$ is the union of the blue and red blocks.}
    \label{fig:ucp-osc-di}
\end{figure}

We construct a nonconstant solution such that $\partial_tu=0$ almost everywhere on $G$. Define
\[f=\chi_{\widetilde A}+c_A\chi_{\widetilde I\setminus\widetilde A}, \qquad
c_A=-\frac{\meas_{\R}(\widetilde A)}{2-\meas_{\R}(\widetilde A)}.
\]
Choose initial data $(u_0,u_1)\in H_0^1(I)\times L^2(I)$ such that

\begin{align}\label{eq:init-cor-di} \left.\partial_\xi u(x)\right|_{t=0}=f([x;\eta]),
\qquad
\left.\partial_\eta u(x)\right|_{t=0}=f([x;\xi]).
\end{align}
This choice verifies that
\[\int_I\partial_xu_0\,\dd x=\int_I(\partial_\xi u+\partial_\eta u)|_{t=0}\,\dd x
=\int_{\widetilde I}f(\widetilde x)\,\dd\widetilde x=0,
\]
so $u_0$ can be chosen with $u_0(0)=u_0(1)=0$. Lemma~\ref{lem:refl-d} gives
\[\partial_\eta u=\partial_\xi u=1\quad\text{on }\M_{\widetilde A}, \qquad
\partial_\eta u=\partial_\xi u=c_A\quad\text{on }\M_{\widetilde I\setminus\widetilde A}.
\]
Thus, $u$ is nonconstant and
\[ \partial_tu=\partial_\xi u-\partial_\eta u=0 \quad\text{on } \M_{\widetilde A}\cup\M_{\widetilde I\setminus\widetilde A}. \]
In view of~\eqref{eq:ucp-sym-di}, this contradicts UCP on $G$.
\end{proof}

Given a function $f$ on $\widetilde I$, define a pair $(g,h)$ on $I$ by
\[ g(x)=f([x;\eta]), \qquad h(x)=f([x;\xi]), \qquad x\in I. \]
Conversely, these identities determine a function $f$ on $\widetilde I$ from a pair $(g,h)$. We say that $f$ \emph{corresponds to} $(g,h)$ in the Dirichlet setting.

\begin{proposition}\label{prop:ucp-d}
Let $T>0$, and let $G\subset[0,T]\times I$ satisfy \refinblack{def:w-gcc}{\textnormal{(Weak GCC)}} and \refinblack{def:osc-d}{\textnormal{(OSC-D)}}. If $u$ solves~\eqref{eq:wave-di} and $\partial_tu=0$ almost everywhere on $G$, then $u=0$.
\end{proposition}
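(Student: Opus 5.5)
Encode the solution by the function $f$ on $\widetilde I$ corresponding to $(g,h)=(\partial_\xi u|_{t=0},\partial_\eta u|_{t=0})$, i.e.\ $f([x;\eta])=\partial_\xi u(0,x)$ and $f([x;\xi])=\partial_\eta u(0,x)$, so that $f\in L^2(\widetilde I)$. By Lemma~\ref{lem:refl-d}, at almost every point $(t,x)$ with generalized coordinates $(\widetilde y,\widetilde z)$ we have $\partial_\eta u(t,x)=f(\widetilde y)$ and $\partial_\xi u(t,x)=f(\widetilde z)$. This is the same bookkeeping as in Proposition~\ref{prop:ucp-osc-d}, which is why the labels $\xi,\eta$ appear swapped. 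Since $\partial_t u=\partial_\xi u-\partial_\eta u$, the hypothesis $\partial_tu=0$ a.e.\ on $G$ becomes
\[ f(\widetilde y)=f(\widetilde z)\quad\text{for a.e.\ point of $G$ with coordinates $(\widetilde y,\widetilde z)$.} \]
The goal is to show that $f$ is a.e.\ constant. Once that is known, $\partial_xu_0=g+h=2c$ and $\int_I\partial_xu_0=0$ by the Dirichlet condition, so $c=0$. Then $\partial_xu_0=u_1=0$, hence $u_0=0$ and $u\equiv0$.

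To prove that $f$ is constant, take level sets. For $\lambda\in\R$ set $\widetilde A_\lambda=\{\widetilde x\in\widetilde I: f(\widetilde x)>\lambda\}$, which is measurable. At a.e.\ point of $G$ the two coordinates carry the same value of $f$, so both lie in $\widetilde A_\lambda$ or both lie in its complement. That point is therefore in $\M_{\widetilde A_\lambda}(T)\cup\M_{\widetilde I\setminus\widetilde A_\lambda}(T)$. Hence $G$ is $(\widetilde A_\lambda;T)$-observable symmetric. One must check that the exceptional null sets do not depend on $\lambda$: use rational $\lambda$ only, with a single null set coming from the a.e.\ identity. Points on the boundary lines and on the null set of Lemma~\ref{lem:refl-d} contribute measure zero. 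By (OSC-D), $\widetilde A_\lambda$ is trivial for every rational $\lambda$, so $|\widetilde A_\lambda|\in\{0,2\}$. The function $\lambda\mapsto|\widetilde A_\lambda|$ is monotone, so it jumps from $2$ to $0$ at a single value $c$, and $f=c$ a.e.

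The delicate points, and the main obstacle, are measure-theoretic. First, the statement ``$f(\widetilde y)=f(\widetilde z)$ a.e.\ on $G$'' must be made rigorous. The map $(t,x)\mapsto(\widetilde y,\widetilde z)$ is piecewise affine (the characteristic coordinates $x\pm t$ folded by reflection), so preimages of null sets in $\widetilde I$ are null in $[0,T]\times I$. This makes $f\circ\widetilde y$ and $f\circ\widetilde z$ well defined a.e.\ for $f\in L^2$. Second, the condition must be justified for weak rather than smooth solutions: I would argue by density, or directly via d'Alembert's formula for the odd $2$-periodic extension, which is where Lemma~\ref{lem:refl-d} comes from.

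Note where (Weak GCC) enters. The argument above seems to use only (OSC-D), yet Proposition~\ref{prop:ucp-wgcc} shows that (Weak GCC) is necessary. The resolution is that (OSC-D) already fails when (Weak GCC) fails, though this need not be proved here. If an issue appears with degenerate sets $G$ (e.g.\ $|G|=0$, where every $\widetilde A$ makes $G$ observable symmetric, so (OSC-D) fails trivially), the hypothesis (OSC-D) already excludes them. So I expect the proof to invoke (Weak GCC) at most for consistency. I would double-check that no case of the level-set argument secretly needs it.
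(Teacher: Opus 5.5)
Your proof is correct, but it reaches the conclusion by a different mechanism than the paper.

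\textbf{The paper's route.} It first uses (Weak GCC). For a.e.\ $\widetilde x$, the set $J_{\widetilde x}$ of admissible times on $L_{\widetilde x}$ has positive length. Each partner coordinate $F(\widetilde x,s)$ is hit for at most $N_T=\lfloor T\rfloor+2$ values of $s$. Hence $f^{-1}(f(\widetilde x))$ has positive measure for a.e.\ $\widetilde x$, so $f$ is a countable step function $\sum_k s_k\chi_{\widetilde A_k}$. (OSC-D) is then invoked once, for $\widetilde A_1$ versus $\widetilde I\setminus\widetilde A_1$, to exclude a second step.

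\textbf{Your route.} You invoke (OSC-D) for every superlevel set $\{f>\lambda\}$ and conclude from the monotonicity of $\lambda\mapsto|\{f>\lambda\}|$. This works for an arbitrary measurable $f$, so you never need $f$ to be countably valued, and (Weak GCC) plays no role.

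\textbf{Your side remark.} Your explanation of why (Weak GCC) is not needed is also right, and it is a two-line check. Let $\widetilde B$ be the set of labelled points whose characteristic meets $G$ in a null set. Fubini along characteristics gives $\meas_{\R^2}(G\cap L_{\widetilde B})=0$. So almost every point of $G$ lies in $\M_{\widetilde I\setminus\widetilde B}(T)$, i.e.\ $G$ is $(\widetilde B;T)$-observable symmetric; if $|\widetilde B|=2$, then $G$ is null. Thus (OSC-D) implies (Weak GCC), and that hypothesis is redundant both here and in Theorem~\ref{thm:main-ucp}.

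\textbf{Comparison.} Your route buys brevity and this sharper statement. The paper's route buys a structural fact: under (Weak GCC), the characteristic data of any solution with $\partial_tu=0$ on $G$ are countably valued. The paper reuses this in the same form for the Neumann case (Proposition~\ref{prop:ucp-n-sim}). Your superlevel-set trick transfers there too. Use the pairs $(\{f>\lambda\},\{f<-\lambda\})$ with $\lambda>0$, plus one extra step: if exactly one of the two sets is null, a.e.\ point of $G$ has no coordinate in the other, and splitting it gives a nontrivial pair violating (OSC-N).
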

\begin{proof}
First, we prove that the function $f$ corresponding to $(\partial_\xi u,\partial_\eta u)|_{t=0}$ in the Dirichlet setting must satisfy
    \begin{equation}\label{eq:coun-sim-di} f(\widetilde x)=\sum_{k\in\Lambda}s_k\chi_{\widetilde A_k}(\widetilde x)
\quad\text{for a.e. }\widetilde x\in\widetilde I,
    \end{equation}
where the numbers $s_k$ are pairwise distinct, the sets $\widetilde A_k$ are pairwise disjoint, $\bigcup_{k\in\Lambda}\widetilde A_k=\widetilde I$, and $\Lambda$ is at most countable.

Since $G$ satisfies (Weak GCC), there is a set $\mathcal N_1\subset G$ such that $\meas_{\R^2}(\mathcal N_1)=0$ and $\partial_tu=0$ on $G\setminus\mathcal N_1$. Let $\mathcal N_2$ be the null set in Lemma~\ref{lem:refl-d}, and set $\mathcal N=\mathcal N_1\cup\mathcal N_2$. For $\widetilde x\in\widetilde I$, define
\[ J_{\widetilde x}=\{s\in[0,T]:L_{\widetilde x}(s)\in G\setminus\mathcal N\}. \]

By \refinblack{def:w-gcc}{\textnormal{(Weak GCC)}} and Fubini's theorem, for almost every $\widetilde x\in\widetilde I$, one has  $\meas_\R(J_{\widetilde x})>0$. Denote the set of such $\widetilde x$ by $\mathcal P$. For $\widetilde x\in\mathcal P$, let
\[ E_{\widetilde x}=\{\widetilde y\in\widetilde I: \widetilde y=F(\widetilde x,s)\text{ for some }s\in J_{\widetilde x}\}. \]
Then $E_{\widetilde x}\subset f^{-1}(f(\widetilde x))$ by Lemma~\ref{lem:refl-d}.

For $\widetilde x\in\mathcal P$ and $\widetilde y\in E_{\widetilde x}$, there are at most $N_T=\lfloor T\rfloor+2$ values of $s\in J_{\widetilde x}$ such that $F(\widetilde x,s)=\widetilde y$. Hence, for almost every $\widetilde x\in\widetilde I$,
\[ \meas_\R\!\left(f^{-1}(f(\widetilde x))\right) \geq\meas_\R(E_{\widetilde x}) \geq\frac1{N_T}\meas_\R(J_{\widetilde x}) >0. \]

Define
\[ \mathcal S=\{a\in\R:\meas_\R(f^{-1}(a))>0\}. \]
The set $\mathcal S$ is at most countable. Write $\mathcal S=\{s_k\}_{k\in\Lambda}$ with the $s_k$ pairwise distinct, and set $\widetilde A_k=f^{-1}(s_k)$. Then $|\widetilde A_k|>0$ for every $k\in\Lambda$, and $f$ has the form~\eqref{eq:coun-sim-di}.
    
We now show that $\#\Lambda=1$. Suppose instead that $\#\Lambda\geq2$, and let
\[ \mathcal Z=\{(t,x)\in[0,T]\times I:\partial_tu(t,x)=0\}. \]
Then $G\subset\mathcal Z$ up to a null set. For every $(t,x)\in\mathcal Z$, its generalized coordinates $(\widetilde y,\widetilde z)$ belong to the same $\widetilde A_k$. Consequently,
\[ \mathcal Z\subset\bigcup_{k\in\Lambda}\M_{\widetilde A_k}(T), \]
and therefore
\[G\subset\bigcup_{k\in\Lambda}\M_{\widetilde A_k}(T)
\subset\M_{\widetilde A_1}(T)\cup\M_{\widetilde I\setminus\widetilde A_1}(T)\quad\text{up to a null set}.
\]
This contradicts (OSC-D). Hence $\#\Lambda=1$, so $f=s_1$ almost everywhere on $\widetilde I$. The boundary conditions imply $s_1=0$, and consequently $u=0$.
\end{proof}

\section{Observable symmetry condition in the Neumann setting}\label{sec:osc-n}

We now turn to the Neumann boundary setting. The key difference from the Dirichlet setting is the sign change of the characteristic derivatives upon reflection at the boundary, as described in Lemma~\ref{lem:refl-n}.

\subsection{Counterexample}\label{subsec:ctex-n}
Let
\[ u(t,x)\in C([0,T];H^1(I))\cap C^1([0,T];L^2(I)) \]
be a solution of~\eqref{eq:wave-ne}. The reflection law now takes the following form.
\begin{lemma}\label{lem:refl-n}
Let $T>0$, and let $u$ be a solution of~\eqref{eq:wave-ne}. For almost every $(t,x)\in[0,T]\times I$, if the generalized coordinates of $(t,x)$ are $(\widetilde y,\widetilde z)$, then
\[\partial_\eta u(t,x)=\left\{\begin{aligned}
    \partial_\eta u(0,y)&,\text{if } \widetilde y=[y;\xi],\\
    -\partial_\xi u(0,y)&,\text{if } \widetilde y=[y;\eta],
\end{aligned}\right.\quad \partial_\xi u(t,x)=\left\{\begin{aligned}
    \partial_\xi u(0,z)&,\text{if } \widetilde z=[z;\eta],\\
    -\partial_\eta u(0,z)&,\text{if } \widetilde z=[z;\xi].
\end{aligned}\right.
\]
\end{lemma}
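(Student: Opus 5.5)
We will prove the Neumann reflection law, Lemma~\ref{lem:refl-n}, by d'Alembert's formula on the characteristic derivatives. First we argue for smooth data and then pass to weak solutions by density. For $u$ smooth, a direct computation gives
\[
(\partial_t-\partial_x)(\partial_\xi u)=\tfrac12(\partial_t^2-\partial_x^2)u=0,
\qquad
(\partial_t+\partial_x)(\partial_\eta u)=-\tfrac12(\partial_t^2-\partial_x^2)u=0.
\]
Hence $\partial_\xi u$ is constant along lines $x+t=\text{const}$, which are traversed leftward (the $\eta$-direction). Likewise, $\partial_\eta u$ is constant along lines $x-t=\text{const}$, traversed rightward (the $\xi$-direction). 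This is consistent with the coordinate convention: $\partial_\eta u(t,x)$ is transported along the $\xi$-oriented characteristic $L_{\widetilde y}$, and $\partial_\xi u(t,x)$ along the $\eta$-oriented characteristic $L_{\widetilde z}$.

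The key step is the boundary rule. At $x=0$ or $x=1$ the Neumann condition gives $\partial_xu=0$, so
\[
2\partial_\xi u=\partial_tu=-2\partial_\eta u \quad\text{on the boundary.}
\]
Thus, when a characteristic hits a wall, the transported quantity switches from $\partial_\eta u$ to $\partial_\xi u$ (or conversely) and acquires a factor $-1$. In the Dirichlet case one has instead $\partial_tu=0$ there, so $\partial_\xi u=\partial_\eta u$, which explains the absence of the sign in Lemma~\ref{lem:refl-d}.

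Now trace $L_{\widetilde y}$ backward from $(t,x)$ to $t=0$. It may bounce several times (at most $\lfloor T\rfloor+1$), and each bounce flips both the family and the sign. The parity of the number of bounces is encoded in the label of $\widetilde y$. If $\widetilde y=[y;\xi]$, the line starts rightward, so it has undergone an even number of bounces; the two signs cancel pairwise and we get $\partial_\eta u(0,y)$. If $\widetilde y=[y;\eta]$, it started leftward, so there was an odd number of bounces and we get $-\partial_\xi u(0,y)$. We should check that a double bounce really returns $+\partial_\eta u$: at the first wall $\partial_\eta\mapsto-\partial_\xi$, and at the second $\partial_\xi\mapsto-\partial_\eta$, so the net factor is $+1$. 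The formula for $\partial_\xi u$ along $L_{\widetilde z}$ follows symmetrically. The main subtlety is to verify this parity/label correspondence carefully, using the fact that the label records the initial direction.

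Finally, we remove the smoothness assumption. The map $(u_0,u_1)\mapsto(\partial_\xi u,\partial_\eta u)$ is continuous from $H^1\times L^2$ into $C([0,T];L^2(I))^2$ by energy conservation. Moreover, the right-hand sides of the identities are compositions of $L^2$ initial functions with the measure-preserving (piecewise affine, finitely many pieces) change of variables $(t,x)\mapsto\widetilde y$ or $\widetilde z$. Both sides are therefore continuous in $L^2([0,T]\times I)$ as functions of the data. Approximating the data by smooth compatible data (for instance, cosine series truncations), we pass to an a.e.-convergent subsequence. This yields the identities almost everywhere, off a null set $\mathcal N$ which also contains the boundary-hitting characteristics through corners.
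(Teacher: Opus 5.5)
Your proof is correct. The transport identities, the Neumann boundary relation $\partial_\xi u=-\partial_\eta u$ at $x\in\{0,1\}$, reading the parity of the number of bounces from the label of $\widetilde y$ (resp.\ $\widetilde z$), and the density step via cosine truncations all check out; the truncations converge in $H^1(I)\times L^2(I)$ with no compatibility condition.

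The paper gives no proof of this lemma (its Dirichlet counterpart is called elementary and omitted), so you are supplying the intended elementary argument. One phrase should be tightened: the change of variables is measure-preserving only slice by slice. For each fixed $t$, the map $x\mapsto\widetilde y(t,x)$ is an injective piecewise translation of $I$ into $\widetilde I$. This gives $\|\phi(\widetilde y(t,\cdot))\|_{L^2(I)}\le\|\phi\|_{L^2(\widetilde I)}$, and integrating in $t$ yields the $L^2([0,T]\times I)$ continuity you use.
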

We can now give a counterexample in the Neumann setting. Set
\[u\vert_{t=0}=-\frac{1}{2}\chi_{(0,\frac{1}{4})}(x)+\frac{1}{2}\chi_{(\frac{3}{4},1)}(x)+2\left(x-\frac{1}{2}\right)\chi_{(\frac{1}{4},\frac{3}{4})}(x),
\]
and
\[ \partial_t u\vert_{t=0} = -2\chi_{(0,\frac{1}{4})}(x)+2\chi_{(\frac{3}{4},1)}(x), \]
Then
\[\left.\partial_\xi u\right|_{G_0}=\left.\partial_\eta u\right|_{G_0}
=\left.\partial_\xi u\right|_{\{0\}\times(\frac14,1)}
=\left.\partial_\eta u\right|_{\{0\}\times(0,\frac34)}=1,
\]
\[\left.\partial_\xi u\right|_{G_1}=\left.\partial_\eta u\right|_{G_1}
=\left.\partial_\xi u\right|_{\{0\}\times(0,\frac14)}
=\left.\partial_\eta u\right|_{\{0\}\times(\frac34,1)}=-1.
\]
See Figure~\ref{fig:gcc-ctex-ne}. Thus, $\partial_tu=0$ almost everywhere on a set $G$ satisfying (GCC), although $u$ is nonconstant.

\begin{figure}[htp]
\tikzset{every picture/.style={line width=0.75pt}}
\begin{tikzpicture}[x=0.75pt,y=0.75pt,yscale=100,xscale=100,scale=1.4]
\draw[->, thick] (0,0) -- (1.1,0) node[right] {$x$};
\draw[->, thick] (0,0) -- (0,1.1) node[above] {$t$};
\node[left] at (0,1) {$1$};
\node[below] at (1,0) {$1$};
\node[below left] at (0,0) {$0$};
\draw[lightgray, very thin] (0,0) grid (1,1);
\draw[dashed, blue] (-0.1,0.35) -- (0.35,-0.1) ;
\draw[dashed, blue] (0.15,1.1) -- (1.1,0.15) ;
\draw[dashed, red] (-0.1,0.15) -- (0.85,1.1) ;
\draw[dashed, red] (0.65,-0.1) -- (1.1,0.35) ;
\draw[thick] (0.25,0) -- (0.75,0) -- (1,0.25) -- (0.5,0.75) -- (0,0.25) -- cycle;
\fill[red!30] (0.25,0) -- (0.75,0) -- (1,0.25) -- (0.5,0.75) -- (0,0.25) -- cycle;
\node at (0.5,0.25) {$+1$};

\draw[thick] (0.5,0.75) -- (0.75,1) -- (0.25,1) -- cycle;
\fill[blue!30](0.5,0.75) -- (0.75,1) -- (0.25,1) -- cycle;
\node at (0.5,0.9) {$-1$};
\draw[thick] (0,0) -- (1,0) -- (1,1) -- (0,1) -- cycle;
\node[right] at (1.25,0.8) {$G = G_0 \cup G_1$};
\node[right] at (1.25,0.6) {$G_0=\{(t,x):\partial_\xi u=\partial_\eta u=+1\}$, red part};
\node[right] at (1.25,0.4) {$G_1=\{(t,x):\partial_\xi u=\partial_\eta u=-1\}$, blue part};
\end{tikzpicture}
    \caption{Counterexample under Neumann boundary conditions}
    \label{fig:gcc-ctex-ne}
\end{figure}

\subsection{Definitions and notation}
To explain the preceding example, we use the notation introduced in Section~\ref{sec:osc-d} and add one new family of sets.

For two disjoint subsets $\widetilde E,\widetilde F\subset\widetilde I$, define
\[\mathcal W_{\widetilde E,\widetilde F}
=\{(t,x)\in[0,\infty)\times I:\text{there exist }\widetilde y\in\widetilde E,\ \widetilde z\in\widetilde F,
\ \widetilde y\neq\widetilde z,\text{ such that }(t,x)\in L_{\widetilde y}\cap L_{\widetilde z}\}.
\]
Set $\mathcal W_{\widetilde E,\widetilde F}(\tau)
=\mathcal W_{\widetilde E,\widetilde F}\cap\{t\leq\tau\}$.
\begin{definition}
Let $\widetilde E,\widetilde F\subset\widetilde I$ be disjoint measurable subsets, and let $0\leq\tau\leq T$. A set $G\subset[0,T]\times I$ is said to be $(\widetilde E,\widetilde F;\tau)$-observable symmetric if
    \begin{equation}\label{eq:obs-sym-ne} \meas_{\R^2}\!\left(G\setminus\left(\mathcal W_{\widetilde E,\widetilde F}(\tau)
 \cup\M_{\widetilde I\setminus(\widetilde E\cup\widetilde F)}(\tau)\right)\right)=0.
\end{equation} 
Thus, almost every point of $G$ belongs either to $\mathcal W_{\widetilde E,\widetilde F}$ or to $\M_{\widetilde I\setminus(\widetilde E\cup\widetilde F)}$.
\end{definition}

A pair of disjoint sets $(\widetilde E,\widetilde F)$ is called trivial if $|\widetilde E|=0$ or $|\widetilde F|=0$; otherwise, it is called nontrivial.

\begin{definition}[Observable symmetry condition in the Neumann setting]\label{def:osc-n}
Let $T>0$. A measurable set $G\subset[0,T]\times I$ satisfies \emph{(OSC-N)} if, for every nontrivial pair of disjoint measurable sets $(\widetilde E,\widetilde F)$, the set $G$ is not $(\widetilde E,\widetilde F;T)$-observable symmetric.
\end{definition}
\vspace{2mm}

\begin{figure}[htp]
\tikzset{every picture/.style={line width=0.75pt}} 
\begin{tikzpicture}[x=0.75pt,y=0.75pt,yscale=-1,xscale=1,scale=1.34]

\draw  (80,320.17) -- (280,320.17)(99.87,62) -- (99.87,322) (273,315.17) -- (280,320.17) -- (273,325.17) (94.87,69) -- (99.87,62) -- (104.87,69)  ;
\draw  [fill={rgb, 255:red, 212; green, 212; blue, 212 }  ,fill opacity=1 ] (100,90) -- (260,90) -- (260,320) -- (100,320) -- cycle ;
\draw  [dash pattern={on 4.5pt off 4.5pt}]  (100,330) -- (260,330) ;
\draw  [dash pattern={on 4.5pt off 4.5pt}]  (100,340) -- (260,340) ;
\draw  [color={rgb, 255:red, 208; green, 2; blue, 27 }  ,draw opacity=1 ][fill={rgb, 255:red, 208; green, 2; blue, 27 }  ,fill opacity=1 ] (110,338) -- (130,338) -- (130,342) -- (110,342) -- cycle ;
\draw  [color={rgb, 255:red, 208; green, 2; blue, 27 }  ,draw opacity=1 ][fill={rgb, 255:red, 208; green, 2; blue, 27 }  ,fill opacity=1 ] (230,328) -- (250,328) -- (250,332) -- (230,332) -- cycle ;
\draw  [color={rgb, 255:red, 74; green, 144; blue, 226 }  ,draw opacity=1 ][fill={rgb, 255:red, 74; green, 144; blue, 226 }  ,fill opacity=1 ] (190,328) -- (210,328) -- (210,332) -- (190,332) -- cycle ;
\draw  [color={rgb, 255:red, 208; green, 2; blue, 27 }  ,draw opacity=1 ][fill={rgb, 255:red, 255; green, 255; blue, 255 }  ,fill opacity=1 ] (260,190) -- (130,320) -- (110,320) -- (260,170) -- cycle ;
\draw  [color={rgb, 255:red, 74; green, 144; blue, 226 }  ,draw opacity=1 ][fill={rgb, 255:red, 255; green, 255; blue, 255 }  ,fill opacity=1 ] (260,230) -- (170,320) -- (150,320) -- (260,210) -- cycle ;
\draw  [color={rgb, 255:red, 208; green, 2; blue, 27 }  ,draw opacity=1 ][fill={rgb, 255:red, 255; green, 255; blue, 255 }  ,fill opacity=1 ] (260,170) -- (260,190) -- (160,90) -- (180,90) -- cycle ;
\draw  [color={rgb, 255:red, 74; green, 144; blue, 226 }  ,draw opacity=1 ][fill={rgb, 255:red, 255; green, 255; blue, 255 }  ,fill opacity=1 ] (140,90) -- (260,210) -- (260,230) -- (120,90) -- cycle ;
\draw  [color={rgb, 255:red, 74; green, 144; blue, 226 }  ,draw opacity=1 ][fill={rgb, 255:red, 74; green, 144; blue, 226 }  ,fill opacity=1 ] (150,338) -- (170,338) -- (170,342) -- (150,342) -- cycle ;
\draw  [color={rgb, 255:red, 74; green, 144; blue, 226 }  ,draw opacity=1 ][fill={rgb, 255:red, 255; green, 255; blue, 255 }  ,fill opacity=1 ] (100,210) -- (210,320) -- (190,320) -- (100,230) -- cycle ;
\draw  [color={rgb, 255:red, 208; green, 2; blue, 27 }  ,draw opacity=1 ][fill={rgb, 255:red, 255; green, 255; blue, 255 }  ,fill opacity=1 ] (100,170) -- (250,320) -- (230,320) -- (100,190) -- cycle ;
\draw  [color={rgb, 255:red, 208; green, 2; blue, 27 }  ,draw opacity=1 ][fill={rgb, 255:red, 255; green, 255; blue, 255 }  ,fill opacity=1 ] (180,90) -- (200,90) -- (100,190) -- (100,170) -- cycle ;
\draw  [color={rgb, 255:red, 74; green, 144; blue, 226 }  ,draw opacity=1 ][fill={rgb, 255:red, 255; green, 255; blue, 255 }  ,fill opacity=1 ] (240,90) -- (100,230) -- (100,210) -- (220,90) -- cycle ;
\draw  [fill={rgb, 255:red, 126; green, 211; blue, 33 }  ,fill opacity=1 ] (200,110) -- (190,120) -- (200,130) -- (210,120) -- cycle ;
\draw  [fill={rgb, 255:red, 126; green, 211; blue, 33 }  ,fill opacity=1 ] (160,110) -- (150,120) -- (160,130) -- (170,120) -- cycle ;
\draw  [fill={rgb, 255:red, 65; green, 117; blue, 5 }  ,fill opacity=1 ] (120,190) -- (110,200) -- (120,210) -- (130,200) -- cycle ;
\draw  [fill={rgb, 255:red, 65; green, 117; blue, 5 }  ,fill opacity=1 ] (240,190) -- (230,200) -- (240,210) -- (250,200) -- cycle ;
\draw  [fill={rgb, 255:red, 126; green, 211; blue, 33 }  ,fill opacity=1 ] (160,270) -- (150,280) -- (160,290) -- (170,280) -- cycle ;
\draw  [fill={rgb, 255:red, 126; green, 211; blue, 33 }  ,fill opacity=1 ] (200,270) -- (190,280) -- (200,290) -- (210,280) -- cycle ;

\draw (83.56,61.5) node [anchor=north west][inner sep=0.75pt]   [align=left] {$t$};
\draw (286.44,311.19) node [anchor=north west][inner sep=0.75pt]   [align=left] {$x$};
\draw (320,134) node [anchor=north west][inner sep=0.75pt]   [align=left] {$\widetilde E$ is the union of red blocks.};
\draw (320,154) node [anchor=north west][inner sep=0.75pt]   [align=left] {$\widetilde F$ is the union of blue blocks.};
\draw (320,174) node [anchor=north west][inner sep=0.75pt][align=left] {$\mathcal W_{\widetilde E,\widetilde F}(T)$ is the union of \\ dark green and light green parts.};

\draw (320,204) node [anchor=north west][inner sep=0.75pt]{$\M_{\widetilde I\setminus(\widetilde E\cup\widetilde F)}(T)$ is the gray part.};

\draw (320,224) node [anchor=north west][inner sep=0.75pt][align=left]{If $G$ is a subset of $\mathcal W_{\widetilde E,\widetilde F}(T)\cup \M_{\widetilde I\setminus(\widetilde E\cup\widetilde F)}(T)$, \\ $G$ is $(\widetilde E,\widetilde F;T)$-observable symmetric.};
\draw (110,348) node [anchor=north west][inner sep=0.75pt]   [align=left] {The upper black dashed line segment represents $[I;\eta]$;};
\draw (110,363) node [anchor=north west][inner sep=0.75pt]   [align=left] {The lower black dashed line segment represents $[I;\xi]$.};
\end{tikzpicture}
    \caption{OSC in the Neumann setting}
    \label{fig:osc-ne}
\end{figure}
Returning to the counterexample in Section~\ref{subsec:ctex-n}, define
\[ \widetilde E=[I_0;\xi]\cup[I_1;\eta]\subset\widetilde I, \qquad I_0=\left[0,\frac34\right],\quad I_1=\left[0,\frac14\right], \]
and let $\widetilde F=\widetilde I\setminus\widetilde E$. Then $G$ is $(\widetilde E,\widetilde F;1)$-observable symmetric, which explains why observability~\eqref{eq:obs} fails although $G$ satisfies (GCC).

\subsection{Comparison with the Dirichlet case}\label{subsec:diff-n}
The Dirichlet and Neumann symmetry conditions are independent. Their difference is most visible in a boundary triangle. In the Dirichlet setting, two characteristic derivatives retain the same sign after reflection, whereas in the Neumann setting one of them changes sign. Consequently, the boundary-triangle configuration in Figure~\ref{fig:bnd-tri-ne-di} satisfies (OSC-N) but fails (OSC-D). This example also shows why the two geometric conditions should not be identified through the unsupported set assignment $\widetilde A=\widetilde E\cup\widetilde F$.

\begin{figure}[htp]
\tikzset{every picture/.style={line width=0.75pt}} 
\begin{tikzpicture}[x=0.75pt,y=0.75pt,yscale=-1,xscale=1]
\draw   (40,40) -- (200,40) -- (200,200) -- (40,200) -- cycle ;
\draw    (40,200) -- (40,22) ;
\draw [shift={(40,20)}, rotate = 90] [color={rgb, 255:red, 0; green, 0; blue, 0 }  ][line width=0.75]    (10.93,-3.29) .. controls (6.95,-1.4) and (3.31,-0.3) .. (0,0) .. controls (3.31,0.3) and (6.95,1.4) .. (10.93,3.29)   ;
\draw    (40,200) -- (218,200) ;
\draw [shift={(220,200)}, rotate = 180] [color={rgb, 255:red, 0; green, 0; blue, 0 }  ][line width=0.75]    (10.93,-3.29) .. controls (6.95,-1.4) and (3.31,-0.3) .. (0,0) .. controls (3.31,0.3) and (6.95,1.4) .. (10.93,3.29)   ;
\draw  [fill={rgb, 255:red, 126; green, 211; blue, 33 }  ,fill opacity=1 ] (40,40) -- (80,80) -- (40,120) -- cycle ;
\draw  [fill={rgb, 255:red, 126; green, 211; blue, 33 }  ,fill opacity=1 ] (40,120) -- (80,160) -- (40,200) -- cycle ;
\draw  [fill={rgb, 255:red, 126; green, 211; blue, 33 }  ,fill opacity=1 ] (200,40) -- (200,120) -- (160,80) -- cycle ;
\draw  [fill={rgb, 255:red, 126; green, 211; blue, 33 }  ,fill opacity=1 ] (200,120) -- (200,200) -- (160,160) -- cycle ;

\draw (220,120) node [anchor=north west][inner sep=0.75pt]   [align=left] {$G$ is the green part.};
\draw (202,203) node [anchor=north west][inner sep=0.75pt]   [align=left] {$x$};
\draw (42,23) node [anchor=north west][inner sep=0.75pt]   [align=left] {$t$};
\end{tikzpicture}
    \caption{$G$ satisfies (OSC-N) but does not satisfy (OSC-D).}
    \label{fig:bnd-tri-ne-di}
\end{figure}

Under the Neumann boundary condition, Lemma~\ref{lem:refl-n} shows that the characteristic derivatives change sign upon reflection. If $G$ contains the boundary triangles shown in green in Figure~\ref{fig:bnd-tri-ne-di}, the identity $\partial_tu=0$ almost everywhere on $G$ forces $\partial_xu=0$ almost everywhere on the corresponding interval, and hence on $I$ for the illustrated configuration.

The converse implication also fails. Let $T=1$ and set
\[
\widetilde E=[I;\xi],\qquad \widetilde F=[I;\eta],\qquad
G=\mathcal W_{\widetilde E,\widetilde F}(1).
\]
This is the union of the two green triangles in Figure~\ref{fig:osc-d-not-n}. Since $\widetilde I\setminus(\widetilde E\cup\widetilde F)=\varnothing$, the set $G$ is $(\widetilde E,\widetilde F;1)$-observable symmetric and hence fails (OSC-N). On the other hand, every pair in $\widetilde E\times\widetilde F$ occurs, up to the negligible boundary ambiguity, as the generalized coordinates of a point of $G$ before time $1$. If $G$ were $(\widetilde A;1)$-observable symmetric, then
\[
\chi_{\widetilde A}(\widetilde y)=\chi_{\widetilde A}(\widetilde z)
\quad\text{for a.e. }(\widetilde y,\widetilde z)\in
\widetilde E\times\widetilde F.
\]
Fubini's theorem would imply that $\chi_{\widetilde A}$ is almost everywhere constant on all of $\widetilde I$, so $\widetilde A$ would be trivial. Thus $G$ satisfies (OSC-D).

\begin{figure}[htp]
\tikzset{every picture/.style={line width=0.75pt}}
\begin{tikzpicture}[x=0.75pt,y=0.75pt,yscale=-1,xscale=1]
\draw (40,40) -- (200,40) -- (200,200) -- (40,200) -- cycle;
\draw (40,200) -- (40,22);
\draw [shift={(40,20)}, rotate=90] (10.93,-3.29) .. controls (6.95,-1.4) and (3.31,-0.3) .. (0,0) .. controls (3.31,0.3) and (6.95,1.4) .. (10.93,3.29);
\draw (40,200) -- (218,200);
\draw [shift={(220,200)}, rotate=180] (10.93,-3.29) .. controls (6.95,-1.4) and (3.31,-0.3) .. (0,0) .. controls (3.31,0.3) and (6.95,1.4) .. (10.93,3.29);
\draw [fill={rgb,255:red,126;green,211;blue,33},fill opacity=1]
  (40,200) -- (200,200) -- (120,120) -- cycle;
\draw [fill={rgb,255:red,126;green,211;blue,33},fill opacity=1]
  (40,40) -- (200,40) -- (120,120) -- cycle;
\draw (36,203) node [anchor=north west][inner sep=0.75pt] {$0$};
\draw (192,203) node [anchor=north west][inner sep=0.75pt] {$1$};
\draw (220,190) node [anchor=north west][inner sep=0.75pt] {$x$};
\draw (42,23) node [anchor=north west][inner sep=0.75pt] {$t$};
\draw (220,105) node [anchor=north west][inner sep=0.75pt] {$G$ is the green part.};
\end{tikzpicture}
\caption{$G$ satisfies (OSC-D) but does not satisfy (OSC-N).}
\label{fig:osc-d-not-n}
\end{figure}

There is nevertheless a precise relation with the torus condition. Let
\[
\pi(s)=\begin{cases}s,&0\leq s\leq1,\\2-s,&1<s<2,
\end{cases}
\qquad s\in\Ttwo,
\]
and define the reflected periodization of $G$ by
\begin{equation}\label{eq:g-ref}
G_{\mathrm{ref}}=\{(t,s)\in[0,T]\times\Ttwo:(t,\pi(s))\in G\}.
\end{equation}
Thus, on the fundamental domain $[0,2]$, the part over $[1,2]$ is obtained by reflecting $G$ across $x=1$. We use (OSC-T) on $\Ttwo$ in the period-$2$ version of Definition~\ref{def:osc-t}.

\begin{proposition}[Reflection and periodization]\label{prop:osc-ref}
Let $T>0$ and let $G\subset[0,T]\times I$ be measurable. Then $G_{\mathrm{ref}}$ satisfies \refinblack{def:osc-t}{\textnormal{(OSC-T)}} on $[0,T]\times\Ttwo$ if and only if $G$ satisfies both \refinblack{def:osc-d}{\textnormal{(OSC-D)}} and \refinblack{def:osc-n}{\textnormal{(OSC-N)}}.
\end{proposition}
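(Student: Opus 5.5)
The proof rests on a dictionary between characteristics on $\Ttwo$ and labelled starting points in $\widetilde I$. Folding by $\pi$ turns the torus condition on $G_{\mathrm{ref}}$ into a pair of pointwise conditions on the generalized coordinates of points of $G$. A short case analysis then shows that these conditions are exactly the union of the two interval conditions.

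\emph{The dictionary.} On $[0,T]\times\Ttwo$ write $\xi$-lines as $s-t=c$ and $\eta$-lines as $s+t=d$, with $c,d\in\Ttwo$. Define $\iota:\Ttwo\to\widetilde I$ (up to the null set $\{0,1\}$) by $\iota(c)=[c;\xi]$ for $c\in(0,1)$ and $\iota(c)=[2-c;\eta]$ for $c\in(1,2)$. Define $\kappa(d)=[d;\eta]$ for $d\in(0,1)$ and $\kappa(d)=[2-d;\xi]$ for $d\in(1,2)$. Both are measure-preserving bijections. Unfolding the reflections shows that $\pi$ maps the torus $\xi$-line $\{s-t=c\}$ onto the interval characteristic $L_{\iota(c)}$, and the torus $\eta$-line $\{s+t=d\}$ onto $L_{\kappa(d)}$.

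Take $(t,x)\in G$ off a null set, with generalized coordinates $(\widetilde y,\widetilde z)$; here $L_{\widetilde y}$ is $\xi$-oriented and $L_{\widetilde z}$ is $\eta$-oriented at $(t,x)$. It has two preimages in $G_{\mathrm{ref}}$:
\begin{itemize}
\item at $(t,x)$ the torus coordinates are $(c,d)$ with $\iota(c)=\widetilde y$ and $\kappa(d)=\widetilde z$;
\item at the mirror point $(t,2-x)$ the roles swap, and the torus coordinates $(c',d')$ satisfy $\iota(c')=\widetilde z$ and $\kappa(d')=\widetilde y$.
\end{itemize}
I will check this swap explicitly: for instance $c'=2-x-t=2-d$.

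Given a torus pair $(A,B)$, put $\widetilde A=\iota(A)$ and $\widetilde B=\kappa(B)$. Since $\pi$ is two-to-one and preserves null sets, $G_{\mathrm{ref}}$ is $(A,B)$-observable symmetric if and only if, for a.e. point of $G$,
\[
\chi_{\widetilde A}(\widetilde y)=\chi_{\widetilde B}(\widetilde z)
\quad\text{and}\quad
\chi_{\widetilde A}(\widetilde z)=\chi_{\widetilde B}(\widetilde y).
\]
Here I use that $[X\cap L_{\xi\in A}]\mathbin\Delta[X\cap L_{\eta\in B}]$ is null exactly when a.e. point satisfies $\xi\in A\iff\eta\in B$.

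\emph{Case analysis.} Partition $\widetilde I$ into
\[
P=\widetilde A\cap\widetilde B,\quad Q=\widetilde A\setminus\widetilde B,\quad R=\widetilde B\setminus\widetilde A,\quad S=\widetilde I\setminus(\widetilde A\cup\widetilde B).
\]
The two identities force $\widetilde y\in P\Rightarrow\widetilde z\in P$, $\widetilde y\in S\Rightarrow\widetilde z\in S$, $\widetilde y\in Q\Rightarrow\widetilde z\in R$, and $\widetilde y\in R\Rightarrow\widetilde z\in Q$. The converse directions also hold. Hence the torus symmetry is equivalent, up to null sets, to
\[
G\subset\M_P(T)\cup\M_S(T)\cup\mathcal W_{Q,R}(T).
\]
The requirement $\widetilde y\neq\widetilde z$ in the definitions of $\M$ and $\mathcal W$ only discards a null set.

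\emph{Both implications.} Suppose $G$ fails (OSC-D) via a nontrivial $\widetilde A$. Take $A=\iota^{-1}(\widetilde A)$ and $B=\kappa^{-1}(\widetilde A)$, so $Q=R=\varnothing$ and $S=\widetilde I\setminus\widetilde A$; the pair $(A,B)$ is nontrivial. Suppose $G$ fails (OSC-N) via $(\widetilde E,\widetilde F)$. Take $\widetilde A=\widetilde E$ and $\widetilde B=\widetilde F$, so $P=\varnothing$, $S=\widetilde I\setminus(\widetilde E\cup\widetilde F)$ and $\M_S\cup\mathcal W_{\widetilde E,\widetilde F}$ covers $G$; the pair is nontrivial because $|\widetilde E|,|\widetilde F|>0$. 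Either way $G_{\mathrm{ref}}$ fails (OSC-T).

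Conversely, suppose $G_{\mathrm{ref}}$ is $(A,B)$-symmetric with $(A,B)$ nontrivial, and split on $Q$ and $R$:
\begin{itemize}
\item If $|Q|,|R|>0$, then $\M_P\cup\M_S\subset\M_{P\cup S}$, so $G$ is $(Q,R;T)$-observable symmetric and (OSC-N) fails.
\item If, say, $|Q|=0$, then $\mathcal W_{Q,R}$ is null by Fubini, since a positive-measure family of points would need positive-measure $\widetilde y$-sets in $Q$. Thus $G\subset\M_P\cup\M_S$. When $0<|P|<2$, the set $\widetilde A=P$ witnesses failure of (OSC-D), since $\M_S\subset\M_{\widetilde I\setminus P}$.
\end{itemize}

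\emph{Degenerate cases.} I expect the main obstacle to be the degenerate sub-cases, together with the measure-theoretic bookkeeping at the boundary, at $t=0$, and at the exceptional labels. The degenerate sub-cases are $|P|\in\{0,2\}$, which must be handled with $S$ in place of $P$.
\begin{itemize}
\item $|P|=2$ gives $|A|=|B|=2$, which is trivial.
\item If $|P|=0$, then $G\subset\M_S$ up to a null set. Here $\widetilde A=S$ works when $0<|S|<2$.
\item $|S|=2$ means $|A|=|B|=0$, which is trivial.
\item If $|S|=0$ as well, then $G$ is null. Then $G$ is $(\widetilde A;T)$-symmetric for every $\widetilde A$, and (OSC-D) fails anyway.
\end{itemize}
The case $|R|=0$ is symmetric. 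This exhausts the cases and proves the equivalence.
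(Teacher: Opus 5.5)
Your proposal is correct and follows essentially the same route as the paper: the same dictionary between torus characteristics and the labelled double cover (your $\iota,\kappa$ are the inverses of the paper's $\iota$ and $\rho\circ\iota$), the same pair of cross-identities coming from the two lifts $(t,x)$ and $(t,2-x)$, and the same four-set partition ($P,Q,R,S$ correspond to $H_1,\widetilde E,\widetilde F,H_0$), followed by the same case analysis. The only difference is cosmetic and lies in the degenerate case where exactly one of $Q,R$ is null. There you use $P$ (or $S$) as the Dirichlet witness, while the paper uses the positive-measure set that the coordinates of $G$ miss; both choices work.
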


\begin{proof}
Identify the labelled double cover with $\Ttwo$, modulo its endpoints, by
\[
\iota([x;\xi])=x,\qquad \iota([x;\eta])=2-x\pmod 2,
\]
and let $\rho(s)=-s\pmod2$. If $(\widetilde y,\widetilde z)$ are the generalized coordinates of a point of $G$, then the torus characteristic-coordinate pairs of its two lifts in $G_{\mathrm{ref}}$ are
\begin{equation}\label{eq:lifted-coordinates}
(\rho(\iota(\widetilde z)),\iota(\widetilde y))
\quad\text{and}\quad
(\rho(\iota(\widetilde y)),\iota(\widetilde z)),
\end{equation}
up to their order and a null set.

Suppose first that (OSC-D) fails for a nontrivial set $\widetilde A$. Taking
\[
A=\rho(\iota(\widetilde A)),\qquad B=\iota(\widetilde A)
\]
in~\eqref{eq:obs-sym-tor}, formula~\eqref{eq:lifted-coordinates} shows that $G_{\mathrm{ref}}$ is $(A,B)$-observable symmetric. If instead (OSC-N) fails for a nontrivial pair $(\widetilde E,\widetilde F)$, the same conclusion follows by taking
\[
A=\rho(\iota(\widetilde E)),\qquad B=\iota(\widetilde F).
\]
Indeed, the two indicators agree on the lifts of $\mathcal W_{\widetilde E,\widetilde F}$, while both vanish on the lifts of $\M_{\widetilde I\setminus(\widetilde E\cup\widetilde F)}$. In either case the pair $(A,B)$ is nontrivial. Hence (OSC-T) for $G_{\mathrm{ref}}$ implies both interval conditions.

Conversely, suppose that $G_{\mathrm{ref}}$ is $(A,B)$-observable symmetric for a nontrivial pair $(A,B)$. Define two $\{0,1\}$-valued functions on $\widetilde I$ by
\[
c(\widetilde x)=\chi_A(\rho(\iota(\widetilde x))),
\qquad d(\widetilde x)=\chi_B(\iota(\widetilde x)).
\]
By~\eqref{eq:lifted-coordinates}, for almost every point of $G$ with coordinates $(\widetilde y,\widetilde z)$,
\begin{equation}\label{eq:cross-indicators}
c(\widetilde y)=d(\widetilde z),\qquad
c(\widetilde z)=d(\widetilde y).
\end{equation}
Partition $\widetilde I$ into
\[
H_0=\{c=d=0\},\quad H_1=\{c=d=1\},\quad
\widetilde E=\{c=1,d=0\},\quad
\widetilde F=\{c=0,d=1\}.
\]
Equation~\eqref{eq:cross-indicators} says that the two coordinates of a point of $G$ lie either in $H_0\times H_0$, in $H_1\times H_1$, or in $\widetilde E\times\widetilde F$ up to interchange. If both $\widetilde E$ and $\widetilde F$ have positive measure, then $G$ is $(\widetilde E,\widetilde F;T)$-observable symmetric, so (OSC-N) fails. If one of them has positive measure and the other is null, the positive-measure set is missed by the generalized coordinates of $G$ and gives a nontrivial Dirichlet symmetric set; if it has full measure, then $G$ is null and (OSC-D) still clearly fails. Finally, if both are null, nontriviality of $(A,B)$ forces both $H_0$ and $H_1$ to have positive measure, and $G$ is $(H_1;T)$-observable symmetric. Thus failure of (OSC-T) implies failure of at least one of (OSC-D) and (OSC-N), which proves the converse.
\end{proof}

\begin{corollary}[Reflection, GCC, and observability]\label{cor:obs-ref}
Let $T>0$ and let $G\subset[0,T]\times I$ be measurable, and define $G_{\mathrm{ref}}$ by~\eqref{eq:g-ref}. Then
\begin{enumerate}[label=\textnormal{(\roman*)}]
\item $G$ satisfies \refinblack{def:gcc}{\textnormal{(GCC)}} on the interval if and only if $G_{\mathrm{ref}}$ satisfies (GCC) on $\Ttwo$;
\item (OBS-T) holds on $G_{\mathrm{ref}}$ if and only if both (OBS-D) and (OBS-N) hold on $G$.
\end{enumerate}
\end{corollary}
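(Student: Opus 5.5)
For part (i) I will argue directly at the level of rays. The unfolding map $\pi$ sends each straight characteristic of $\Ttwo$ bijectively, up to finitely many reflection times in $[0,T]$, onto a reflected characteristic $L_{\widetilde x}$ of the interval. Under the identification $\iota$ of Proposition~\ref{prop:osc-ref}, the torus ray starting rightward at $s\in\Ttwo$ corresponds to $L_{\iota^{-1}(s)}$, and the leftward ray at $s$ corresponds to $L_{\iota^{-1}(\rho(s))}$. Since $\pi$ is an isometry along each ray segment, the arclength measures satisfy
\[
\mathcal H^1\bigl(G_{\mathrm{ref}}\cap\text{torus ray}\bigr)=\mathcal H^1\bigl(G\cap L_{\widetilde x}\bigr).
\]
The maps $\iota$ and $\rho$ are measure preserving, so the "almost every starting point" quantifiers match on both sides. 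The same constant $c_0$ then works in both directions, which gives (i).

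For part (ii) I will combine Theorem~\ref{thm:main-obs} on the interval, applied twice, with the torus characterization of \cite{NiuWangXiang2026} on $\Ttwo$: (OBS-T) holds if and only if (OSC-T) and (GCC) hold. This gives the chain
\[
\text{(OBS-T) on }G_{\mathrm{ref}}\iff \text{(OSC-T)}+\text{(GCC) on }G_{\mathrm{ref}}\iff \text{(OSC-D)}+\text{(OSC-N)}+\text{(GCC) on }G\iff \text{(OBS-D)}+\text{(OBS-N)}.
\]
The second equivalence uses Proposition~\ref{prop:osc-ref} together with part (i). The third uses Theorem~\ref{thm:main-obs} in its Dirichlet and Neumann versions, since both share the same (GCC).

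If one prefers not to route through Theorem~\ref{thm:main-obs}, since it is proved later, I would give a direct PDE argument instead. Any datum on $\Ttwo$ splits uniquely into its odd and even parts with respect to $s\mapsto -s$. The odd part restricts to a Dirichlet solution on $I$, and the even part restricts to a Neumann solution on $I$. Both the energy and $\iint_{G_{\mathrm{ref}}}|\partial_t v|^2$ split orthogonally, because $G_{\mathrm{ref}}$ is symmetric and the product of an odd and an even function integrates to zero over a symmetric set. Each part then contributes twice the corresponding interval quantity. The two inequalities on $I$ therefore sum to the torus inequality. Conversely, taking purely odd or purely even data recovers each interval inequality from the torus one.

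I expect the main obstacle to be the bookkeeping for the Neumann constants. The torus energy contains $\|\partial_x v_0\|^2$, which vanishes on constants, and (OBS-N) is stated modulo constants in the same way. I need to check that the even extension of a Neumann solution matches this, and that no spurious mean-zero condition enters. I would also verify that the null sets where the reflection fails to be a bijection do not affect either equivalence.
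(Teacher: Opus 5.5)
Your proposal is correct and follows the paper's proof. Part (i) matches each torus ray with a reflected interval characteristic under $\pi$, and your explicit identification via $\iota$ and $\rho$ checks out; part (ii) uses the same chain through the torus characterization of \cite{NiuWangXiang2026}, Proposition~\ref{prop:osc-ref} together with part (i), and Theorem~\ref{thm:main-obs}.

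Your fallback odd/even splitting is also sound, and more self-contained. Since $\pi(-s)=\pi(s)$, the set $G_{\mathrm{ref}}$ is invariant under $s\mapsto -s$, so the cross terms vanish. Hence (ii) holds for every measurable $G$, with constant $\max(C_{\mathrm D},C_{\mathrm N})$, without using (GCC) or any characterization theorem. The Neumann constants you worried about cause no difficulty, because both sides of the inequality are unchanged under $u\mapsto u+c$.
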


\begin{proof}
Every characteristic on $\Ttwo$ projects under $\pi$ to a generalized reflected characteristic on $I$, and this projection preserves the one-dimensional measure of its intersection with the observation set, apart from the negligible reflection points. This proves (i).

By the torus characterization in~\cite{NiuWangXiang2026}, (OBS-T) on $G_{\mathrm{ref}}$ is equivalent to (GCC) and (OSC-T) on $G_{\mathrm{ref}}$. Part (i), Proposition~\ref{prop:osc-ref}, and Theorem~\ref{thm:main-obs} identify these two conditions with (GCC), (OSC-D), and (OSC-N) on $G$, which in turn are equivalent to the simultaneous validity of (OBS-D) and (OBS-N).
\end{proof}

In short, the generalized coordinates associated with points of $G$ must come from the same part of $\widetilde I$ in a Dirichlet symmetric configuration, but from two different parts in a Neumann symmetric configuration. Proposition~\ref{prop:osc-ref} shows that the reflected torus condition excludes exactly both possibilities.

\section{Unique continuation in the Neumann setting}\label{sec:ucp-n}

We now study the unique continuation property (UCP) in the Neumann setting. The proof follows the strategy used in Section~\ref{sec:ucp-d}, but the sign change in Lemma~\ref{lem:refl-n} requires a different correspondence between functions on $\widetilde I$ and the initial characteristic derivatives.

\begin{proposition}\label{prop:ucp-osc-n}
Let $T>0$, and let $G\subset [0,T]\times I$ be measurable. Assume that every solution $u$ of~\eqref{eq:wave-ne} satisfies
\[ \partial_t u=0 \quad\text{a.e. on }G \quad\Longrightarrow\quad u\text{ is constant}. \]
Then $G$ satisfies \refinblack{def:osc-n}{\textnormal{(OSC-N)}}.
\end{proposition}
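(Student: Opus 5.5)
We argue by contradiction, mirroring Proposition~\ref{prop:ucp-osc-d}. Suppose that for some nontrivial pair of disjoint measurable sets $(\widetilde E,\widetilde F)$ the set $G$ is $(\widetilde E,\widetilde F;T)$-observable symmetric, so that up to a null set $G\subset\mathcal W_{\widetilde E,\widetilde F}(T)\cup\M_{\widetilde I\setminus(\widetilde E\cup\widetilde F)}(T)$. We will build a nonconstant Neumann solution with $\partial_tu=0$ a.e. on this union.

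\textbf{Encoding the data.} Because Lemma~\ref{lem:refl-n} inserts a minus sign whenever a characteristic's initial label and its current direction disagree, we use a signed correspondence. For a function $f$ on $\widetilde I$, prescribe
\[
\partial_\xi u|_{t=0}(x)=-f([x;\eta]),\qquad \partial_\eta u|_{t=0}(x)=f([x;\xi]).
\]
The sign placement is to be checked against the lemma so that, at a point with coordinates $(\widetilde y,\widetilde z)$, one gets $\partial_\eta u=\epsilon(\widetilde y)f(\widetilde y)$ and $\partial_\xi u=-\epsilon(\widetilde z)f(\widetilde z)$ for a uniform sign convention. The aim is the identity $\partial_tu=\partial_\xi u-\partial_\eta u=\pm\bigl(f(\widetilde y)+f(\widetilde z)\bigr)$, up to one global sign. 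If the bookkeeping instead produces $f(\widetilde y)-f(\widetilde z)$ on some branches, we would fix it by redefining $f$ with a sign on $[I;\eta]$. In any case, the target statement is that $\partial_tu$ vanishes exactly when the encoded values at the two coordinates are negatives of each other.

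\textbf{Choice of $f$.} Take $f=\chi_{\widetilde E}-\chi_{\widetilde F}$, which vanishes on $\widetilde I\setminus(\widetilde E\cup\widetilde F)$. On $\mathcal W_{\widetilde E,\widetilde F}$ one coordinate lies in $\widetilde E$ and the other in $\widetilde F$, so $f(\widetilde y)+f(\widetilde z)=0$. On $\M_{\widetilde I\setminus(\widetilde E\cup\widetilde F)}$ both values are $0$. Hence $\partial_tu=0$ a.e. on $G$.

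\textbf{Admissibility and nonconstancy.} For Neumann data we only need $u_0\in H^1(I)$ and $u_1\in L^2(I)$, with no mean-zero constraint, since $u_0=\int\partial_xu_0$ up to a constant. Both data are in $L^2$ because $f$ is bounded, so they are admissible. The solution is nonconstant because $|\widetilde E|>0$ gives $f\neq0$ on a positive-measure set. Thus $(\partial_\xi u,\partial_\eta u)|_{t=0}\not\equiv0$, and so $(\partial_xu_0,u_1)\neq0$. This contradicts UCP, which forces $u$ to be constant.

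\textbf{Main obstacle.} The main obstacle is the sign bookkeeping in the first step. We must verify, via Lemma~\ref{lem:refl-n}, that with a single fixed correspondence the relation $\partial_tu=\pm(f(\widetilde y)+f(\widetilde z))$ holds for all four label combinations of $(\widetilde y,\widetilde z)$, including the boundary-triangle case where both labels coincide. If no uniform choice works, we would instead define $f$ on the labels as in the isomorphism $\iota$ of Proposition~\ref{prop:osc-ref}. We would then lift to the even period-$2$ extension on $\Ttwo$, where $\partial_tv=\partial_\xi v-\partial_\eta v$ depends on the two torus coordinates without sign flips, and read off the needed cancellation there.
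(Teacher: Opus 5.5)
Your proposal is correct and follows the paper's proof. The paper uses the same $f=\chi_{\widetilde E}-\chi_{\widetilde F}$ with the encoding $\partial_\xi u|_{t=0}(x)=f([x;\eta])$, $\partial_\eta u|_{t=0}(x)=-f([x;\xi])$, which is exactly the negative of yours. The sign check you deferred does go through: by Lemma~\ref{lem:refl-n}, your data give $\partial_\eta u=f(\widetilde y)$ and $\partial_\xi u=-f(\widetilde z)$ in all four label cases, so $\partial_tu=-\bigl(f(\widetilde y)+f(\widetilde z)\bigr)$ and neither fallback is needed.
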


\begin{proof}
We argue by contradiction. Suppose that there is a nontrivial pair of disjoint measurable sets $\widetilde E,\widetilde F\subset\widetilde I$ such that $G$ is $(\widetilde E,\widetilde F;T)$-observable symmetric. Thus,
\begin{equation}\label{eq:ucp-sym-ne}
G\subset \mathcal W_{\widetilde E,\widetilde F}(T) \cup \M_{\widetilde I\setminus(\widetilde E\cup\widetilde F)}(T) \quad\text{up to a null set}.
\end{equation}

Define the simple function
\[ f=\chi_{\widetilde E}-\chi_{\widetilde F} \quad\text{on }\widetilde I. \]
Choose initial data $(u_0,u_1)\in H^1(I)\times L^2(I)$ such that
\begin{equation}\label{eq:init-cor-ne} \left.\partial_\xi u(x)\right|_{t=0}=f([x;\eta]),
\qquad
  \left.\partial_\eta u(x)\right|_{t=0}=-f([x;\xi]).
\end{equation}
Lemma~\ref{lem:refl-n} and~\eqref{eq:ucp-sym-ne} then give $\partial_tu=0$ almost everywhere on $G$. The resulting solution is nonconstant, contradicting the assumed UCP.
\end{proof}

Motivated by~\eqref{eq:init-cor-ne}, given a function $f$ on $\widetilde I$, define a pair $(g,h)$ on $I$ by
\[ g(x)=f([x;\eta]), \qquad h(x)=-f([x;\xi]), \qquad x\in I. \]
Conversely, a pair $(g,h)$ on $I$ determines $f$ on $\widetilde I$ through these identities. We say that $f$ \emph{corresponds to} $(g,h)$ in the Neumann setting.

\begin{proposition}\label{prop:ucp-n-sim}
Let $T>0$, and let $G\subset [0,T]\times I$ satisfy \refinblack{def:w-gcc}{\textnormal{(Weak GCC)}}. Suppose that $u$ solves~\eqref{eq:wave-ne}, that $\partial_tu=0$ almost everywhere on $G$, and that $(\partial_\xi u,\partial_\eta u)|_{t=0}$ corresponds to a function $f$ on $\widetilde I$ in the Neumann setting. Then
\begin{equation}\label{eq:coun-sim-ne} f(\widetilde x)=\sum_{k\in\Lambda}s_k
\left(\chi_{\widetilde E_k}(\widetilde x)-\chi_{\widetilde F_k}(\widetilde x)\right)
  \quad\text{for a.e. }\widetilde x\in\widetilde I,
\end{equation}
where $\Lambda$ is at most countable, the numbers $s_k>0$ are pairwise distinct, and the sets
\[ \widetilde E_k=f^{-1}(s_k), \qquad \widetilde F_k=f^{-1}(-s_k) \]
have positive measure. The sets $\widetilde E_k$ and $\widetilde F_k$ are pairwise disjoint and mutually disjoint. We allow $\Lambda$ to be empty, in which case $f=0$ almost everywhere.
\end{proposition}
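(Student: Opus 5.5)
The plan is to follow the proof of Proposition~\ref{prop:ucp-d}, with the Dirichlet correspondence replaced by the Neumann one. The key point is that, in the Neumann setting, vanishing of $\partial_tu$ forces $f(\widetilde y)=-f(\widetilde z)$ rather than $f(\widetilde y)=f(\widetilde z)$.

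First I translate Lemma~\ref{lem:refl-n} into the language of $f$. Let $(t,x)$ be a generic point with generalized coordinates $(\widetilde y,\widetilde z)$. If $\widetilde y=[y;\xi]$, then $\partial_\eta u(t,x)=\partial_\eta u(0,y)=-f([y;\xi])=-f(\widetilde y)$. If $\widetilde y=[y;\eta]$, then $\partial_\eta u(t,x)=-\partial_\xi u(0,y)=-f([y;\eta])=-f(\widetilde y)$. So in both cases $\partial_\eta u(t,x)=-f(\widetilde y)$. Similarly, $\partial_\xi u(t,x)=f(\widetilde z)$ in both cases: for $\widetilde z=[z;\eta]$ directly, and for $\widetilde z=[z;\xi]$ because $-\partial_\eta u(0,z)=f([z;\xi])$. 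Hence $\partial_tu=\partial_\xi u-\partial_\eta u=f(\widetilde z)+f(\widetilde y)$, and so $\partial_tu(t,x)=0$ if and only if $f(\widetilde z)=-f(\widetilde y)$. Getting this sign bookkeeping right is the only genuinely new point, and I expect it to be the main (though modest) obstacle.

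Next I repeat the measure argument. Remove the null set $\mathcal N$, consisting of the points where $\partial_tu\ne0$ on $G$ together with the exceptional set of Lemma~\ref{lem:refl-n}. Define $J_{\widetilde x}$ and $E_{\widetilde x}=\{F(\widetilde x,s):s\in J_{\widetilde x}\}$ as before. By (Weak GCC) and Fubini, $\meas_\R(J_{\widetilde x})>0$ for a.e.\ $\widetilde x$. By the first step, $E_{\widetilde x}\subset f^{-1}(-f(\widetilde x))$. As in Proposition~\ref{prop:ucp-d}, the map $s\mapsto F(\widetilde x,s)$ is at most $N_T$-to-one and piecewise an isometry onto its image. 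Therefore
\[
\meas_\R\bigl(f^{-1}(-f(\widetilde x))\bigr)\ge \tfrac1{N_T}\meas_\R(J_{\widetilde x})>0
\]
for a.e.\ $\widetilde x$. Applying the same argument at a point $\widetilde y\in E_{\widetilde x}$ (or simply using the symmetry of the relation) also gives $\meas_\R\bigl(f^{-1}(f(\widetilde x))\bigr)>0$ for a.e.\ $\widetilde x$.

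Now let $\mathcal S=\{a:\meas_\R(f^{-1}(a))>0\}$, which is at most countable. The first positivity statement above shows that for a.e.\ $\widetilde x$ both $f(\widetilde x)$ and $-f(\widetilde x)$ lie in $\mathcal S$. Let $\mathcal S'$ be the set of $a\in\mathcal S$ with $f^{-1}(a)$ not contained in this exceptional null set. Then $\mathcal S'$ is symmetric under $a\mapsto -a$, and $f$ takes values in $\mathcal S'$ almost everywhere.

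The value $0$ needs no separate treatment, since it contributes nothing to the sum. Enumerate the positive elements of $\mathcal S'$ as pairwise distinct numbers $s_k$, $k\in\Lambda$, and set $\widetilde E_k=f^{-1}(s_k)$ and $\widetilde F_k=f^{-1}(-s_k)$. Both sets have positive measure because $\pm s_k\in\mathcal S$. These level sets are pairwise disjoint, since they are preimages of distinct values. Their union, together with $f^{-1}(0)$, has full measure, and this gives \eqref{eq:coun-sim-ne}. If no positive value occurs, then $\Lambda=\varnothing$ and $f=0$ a.e.
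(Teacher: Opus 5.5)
Your proposal is correct and follows essentially the same route as the paper. Both arguments use the $J_{\widetilde x}$/$E_{\widetilde x}$ measure argument with the $N_T$-to-one count, then pair the countably many positive-measure level values as $\{s_k,-s_k\}$. You also make explicit two points the paper leaves implicit: the sign bookkeeping $\partial_tu=f(\widetilde y)+f(\widetilde z)$, and the fact that $f$ takes values almost everywhere in the symmetric set $\mathcal S$.

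One harmless slip: $s\mapsto F(\widetilde x,s)$ is piecewise affine with slope $\pm2$, not an isometry. This only improves the lower bound on $\meas_\R(E_{\widetilde x})$.
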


\begin{proof}
There is a set $\mathcal N_1\subset G$ such that $\meas_{\R^2}(\mathcal N_1)=0$ and $\partial_tu=0$ on $G\setminus\mathcal N_1$. Let $\mathcal N_2$ be the null set in Lemma~\ref{lem:refl-n}, and set $\mathcal N=\mathcal N_1\cup\mathcal N_2$. For $\widetilde x\in\widetilde I$, define
\[ J_{\widetilde x} =\{s\in[0,T]:L_{\widetilde x}(s)\in G\setminus\mathcal N\}. \]

By \refinblack{def:w-gcc}{\textnormal{(Weak GCC)}}, for almost every $\widetilde x\in\widetilde I$,
\[ \meas_{\R}(J_{\widetilde x})>0. \]
Denote the set of such $\widetilde x$ by $\mathcal P$. For $\widetilde x\in\mathcal P$, let
\[ E_{\widetilde x} =\{\widetilde y\in\widetilde I: \widetilde y=F(\widetilde x,s)\text{ for some }s\in J_{\widetilde x}\}. \]
Lemma~\ref{lem:refl-n} implies
$E_{\widetilde x}\subset f^{-1}(-f(\widetilde x))$.

For $\widetilde x\in\mathcal P$ and $\widetilde y\in E_{\widetilde x}$, there are at most $N_T=\lfloor T\rfloor+2$ values of $s\in J_{\widetilde x}$ such that $F(\widetilde x,s)=\widetilde y$. Hence, for almost every $\widetilde x\in\widetilde I$,
\[\meas_{\R}\!\left(f^{-1}(-f(\widetilde x))\right)\geq \meas_{\R}(E_{\widetilde x})
\geq \frac{1}{N_T}\meas_{\R}(J_{\widetilde x})>0.
\]

The set of nonzero values $a$ for which $f^{-1}(a)$ has positive measure is at most countable. The preceding estimate shows that these values occur in pairs $\{s_k,-s_k\}$ with $s_k>0$. Enumerating their distinct absolute values by $k\in\Lambda$ gives~\eqref{eq:coun-sim-ne}.
\end{proof}

\begin{proposition}\label{prop:ucp-n}
Let $T>0$, and let $G\subset[0,T]\times I$ satisfy \refinblack{def:w-gcc}{\textnormal{(Weak GCC)}} and \refinblack{def:osc-n}{\textnormal{(OSC-N)}}. If $u$ solves~\eqref{eq:wave-ne} and $\partial_tu=0$ almost everywhere on $G$, then $u$ is constant.
\end{proposition}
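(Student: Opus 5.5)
We would follow the Dirichlet argument of Proposition~\ref{prop:ucp-d}, with Proposition~\ref{prop:ucp-n-sim} supplying the structure of the initial data. Let $f$ be the function on $\widetilde I$ that corresponds, in the Neumann setting, to $(\partial_\xi u,\partial_\eta u)|_{t=0}$. Since $G$ satisfies (Weak GCC), Proposition~\ref{prop:ucp-n-sim} gives the representation~\eqref{eq:coun-sim-ne}: $f=\sum_{k\in\Lambda}s_k(\chi_{\widetilde E_k}-\chi_{\widetilde F_k})$ almost everywhere, where each $\widetilde E_k=f^{-1}(s_k)$ and $\widetilde F_k=f^{-1}(-s_k)$ has positive measure. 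Let $\widetilde Z=f^{-1}(0)$, so that the sets $\widetilde Z,\widetilde E_k,\widetilde F_k$ partition $\widetilde I$ up to a null set.

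The main step is to show that $\Lambda=\varnothing$. Suppose instead that some $k_0\in\Lambda$ exists. As before, set $\mathcal Z=\{\partial_tu=0\}$. At a point of $\mathcal Z\setminus\mathcal N$ with generalized coordinates $(\widetilde y,\widetilde z)$, Lemma~\ref{lem:refl-n} together with the Neumann correspondence gives $\partial_\eta u=-f(\widetilde y)$ and $\partial_\xi u=f(\widetilde z)$, with the sign conventions matched as in the derivation of $E_{\widetilde x}\subset f^{-1}(-f(\widetilde x))$. The condition $\partial_tu=\partial_\xi u-\partial_\eta u=0$ therefore becomes $f(\widetilde y)=-f(\widetilde z)$. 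Consequently either both coordinates lie in $\widetilde Z$, or one lies in $\widetilde E_k$ and the other in $\widetilde F_k$ for the same $k$. This shows
\[
G\subset \M_{\widetilde Z}(T)\cup\bigcup_{k\in\Lambda}\mathcal W_{\widetilde E_k,\widetilde F_k}(T)\quad\text{up to a null set}.
\]
Now take $\widetilde E=\widetilde E_{k_0}$ and $\widetilde F=\widetilde F_{k_0}$. For $k\neq k_0$, both coordinates of a point in $\mathcal W_{\widetilde E_k,\widetilde F_k}$ lie in $\widetilde I\setminus(\widetilde E\cup\widetilde F)$, and the same holds for points of $\M_{\widetilde Z}$. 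Hence
\[
G\subset\mathcal W_{\widetilde E,\widetilde F}(T)\cup\M_{\widetilde I\setminus(\widetilde E\cup\widetilde F)}(T)\quad\text{up to a null set}.
\]
The pair $(\widetilde E,\widetilde F)$ is nontrivial, so this contradicts (OSC-N). Therefore $f=0$ almost everywhere.

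Once $f=0$, we have $\partial_\xi u(0,\cdot)=\partial_\eta u(0,\cdot)=0$, so $\partial_xu_0=0$ and $u_1=0$. The solution is then the constant $u\equiv u_0$, which proves the proposition. Unlike the Dirichlet case, no mean-zero or boundary condition is needed to pin down a value here.

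The main obstacle is the sign bookkeeping in Lemma~\ref{lem:refl-n}. Both coordinate cases, $\widetilde y=[y;\xi]$ and $\widetilde y=[y;\eta]$, must yield the same relation $f(\widetilde y)=-f(\widetilde z)$ under the correspondence $g=f([\cdot;\eta])$, $h=-f([\cdot;\xi])$. We would verify this case by case, which is the same check already implicit in Proposition~\ref{prop:ucp-n-sim}. The rest is routine: collecting null sets and noting that the union over countably many $k$ preserves the "up to a null set" statement.
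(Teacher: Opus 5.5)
Your proposal is correct and follows the paper's proof. Both arguments invoke Proposition~\ref{prop:ucp-n-sim}, use Lemma~\ref{lem:refl-n} to turn $\partial_tu=0$ into $f(\widetilde y)=-f(\widetilde z)$, and contradict (OSC-N). The only difference is minor: the paper takes $\widetilde E=\bigcup_k\widetilde E_k$ and $\widetilde F=\bigcup_k\widetilde F_k$, whereas you take the single pair $(\widetilde E_{k_0},\widetilde F_{k_0})$ and absorb the other levels into $\M_{\widetilde I\setminus(\widetilde E\cup\widetilde F)}(T)$, which works equally well.
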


\begin{proof}
By Proposition~\ref{prop:ucp-n-sim}, the function $f$ corresponding to $(\partial_\xi u,\partial_\eta u)|_{t=0}$ has the form~\eqref{eq:coun-sim-ne}.

Suppose that $\Lambda$ is nonempty. Set
\[ \widetilde E=\bigcup_{k\in\Lambda}\widetilde E_k, \qquad \widetilde F=\bigcup_{k\in\Lambda}\widetilde F_k. \]
The pair $(\widetilde E,\widetilde F)$ is nontrivial. If
\[ \mathcal S=\{(t,x)\in[0,T]\times I:\partial_tu(t,x)=0\}, \]
then Lemma~\ref{lem:refl-n} gives
\[\mathcal S\subset\mathcal W_{\widetilde E,\widetilde F}(T)
\cup\M_{\widetilde I\setminus(\widetilde E\cup\widetilde F)}(T)\quad\text{up to a null set}.
\]
Since $G\subset\mathcal S$ up to a null set, $G$ is $(\widetilde E,\widetilde F;T)$-observable symmetric, contradicting (OSC-N). Therefore $\Lambda$ is empty, so $f=0$ almost everywhere and $u$ is constant.
\end{proof}

\section{Observability}\label{sec:obs}

Having established UCP in Sections~\ref{sec:ucp-d} and~\ref{sec:ucp-n}, we now combine it with weak observability to prove Theorem~\ref{thm:main-obs}. The weak observability estimates are obtained by extending the interval problems to the period-$2$ torus and applying the estimates in~\cite{NiuWangXiang2026}.

\subsection{Dirichlet case}

We first record the odd-extension argument.

\begin{lemma}\label{lem:odd-ext}
Let $T>0$, and let $u$ be a solution of~\eqref{eq:wave-di}. Extend the initial data $(u_0,u_1)$ oddly to $[-1,1]$ by
\[v_0(x)=\begin{cases}
u_0(x),&0\leq x\leq1,\\
-u_0(-x),&-1\leq x\leq0,
\end{cases}\qquad v_1(x)=\begin{cases}
u_1(x),&0\leq x\leq1,\\
-u_1(-x),&-1\leq x\leq0.
\end{cases}
\]
Extend these functions periodically with period $2$ to the torus
$\Ttwo=\R/(2\Z)$. Let $v$ solve
\begin{equation}\label{eq:wave-tor-2}\begin{cases}
(\partial_t^2-\partial_x^2)v=0,\\
(v,\partial_tv)|_{t=0}=(v_0,v_1),\\
v_0\in\dot H^1(\Ttwo),\quad v_1\in L^2(\Ttwo).
\end{cases}\end{equation}
Then, for every $t\in[0,T]$, one has $v(t,x)=-v(t,-x)$ for almost every $x\in\Ttwo$, and $u(t,x)=v(t,x)$ for almost every $x\in I$.
\end{lemma}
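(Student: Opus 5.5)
The plan is to use uniqueness of weak solutions on $\Ttwo$. Let $v$ be the solution of \eqref{eq:wave-tor-2}. First note that $v_0\in H^1(\Ttwo)$: it is odd and period $2$, and $u_0(0)=u_0(1)=0$ makes the odd extension continuous at $x=0$ and at $x=\pm1$. Also $v_1\in L^2(\Ttwo)$. The $\dot H^1$ notation causes no trouble, because the solution is well defined in $C([0,T];H^1)\cap C^1([0,T];L^2)$ once the mean of $v_0$ is fixed, and oddness forces that mean to be zero.

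\emph{Step 1 (oddness is preserved).} Set $w(t,x)=-v(t,-x)$. The wave operator is invariant under $x\mapsto -x$, so $w$ solves the same equation, in the weak sense after testing against reflected test functions. Its data are $(-v_0(-x),-v_1(-x))=(v_0,v_1)$. By uniqueness of weak solutions on the torus (for instance via Fourier series, where $v$ is given explicitly mode by mode), $w=v$. This gives $v(t,x)=-v(t,-x)$ for a.e. $x$ and all $t$. The same argument with $x\mapsto 2-x$ gives $v(t,x)=-v(t,2-x)$.

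\emph{Step 2 (boundary conditions).} Since $v(t,\cdot)\in H^1(\Ttwo)$ is continuous, the two odd symmetries give $v(t,0)=-v(t,0)$ and $v(t,1)=-v(t,1)$. Hence $v(t,0)=v(t,1)=0$. Restrict to $I$ and set $\tilde u=v|_{[0,T]\times I}$. Then $\tilde u\in C([0,T];H_0^1(I))\cap C^1([0,T];L^2(I))$, it solves the wave equation in $\mathcal D'((0,T)\times I)$, and it has data $(u_0,u_1)$.

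\emph{Step 3 (identification).} Uniqueness of weak solutions of \eqref{eq:wave-di} now gives $\tilde u=u$. One route is to expand in the Dirichlet eigenbasis $\sin(k\pi x)$. Equivalently, and more directly, expand $v$ in the Fourier basis of $\Ttwo$. Oddness means $v$ is a pure sine series $\sum_k a_k(t)\sin(k\pi x)$, and each coefficient satisfies $a_k''+k^2\pi^2a_k=0$ with the same data as the Dirichlet sine coefficients of $u$. So the two series coincide.

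The only delicate point is making uniqueness and the pointwise boundary values rigorous at the energy level. Working with the Fourier/sine expansion avoids this: oddness, the boundary values and the identification all follow coefficientwise. I would present the proof that way, with Steps 1 and 2 read off the expansion.
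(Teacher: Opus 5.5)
Your proposal is correct, and in the form you say you would present it, it is the paper's proof. The paper expands $u_0,u_1$ in $\sin(k\pi x)$, notes that the odd periodic extensions have Fourier coefficients $\widetilde a_{\pm k}=\pm a_k/(2i)$ and $\widetilde b_{\pm k}=\pm b_k/(2i)$, and solves on $\Ttwo$ mode by mode; it then reads off both the oddness of $v(t,\cdot)$ and its agreement with the Dirichlet sine-series solution on $I$. Your other points are also correct: Steps 1--2 (reflection invariance plus uniqueness, and $v(t,0)=v(t,1)=0$) are redundant once you identify coefficients, and the fact that oddness removes the zero mode, which settles the $\dot H^1$ ambiguity, is a detail the paper leaves implicit.
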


\begin{proof}
Write the Dirichlet initial data as sine series:
\[ u_0(x)=\sum_{k=1}^{\infty}a_k\sin(k\pi x), \qquad u_1(x)=\sum_{k=1}^{\infty}b_k\sin(k\pi x) \quad\text{for a.e. }x\in I. \]
Their odd periodic extensions have Fourier expansions
\[v_0(x)=\sum_{k\in\Z\setminus\{0\}}\widetilde a_k e^{ik\pi x}, \qquad
v_1(x)=\sum_{k\in\Z\setminus\{0\}}\widetilde b_k e^{ik\pi x},
\]
where, for $k\geq1$,
\[\widetilde a_k=\frac{a_k}{2i}, \quad \widetilde a_{-k}=-\frac{a_k}{2i},
\qquad
\widetilde b_k=\frac{b_k}{2i}, \quad \widetilde b_{-k}=-\frac{b_k}{2i}.
\]
The torus solution is
\[v(t,x)=\sum_{k\in\Z\setminus\{0\}}
\left(\widetilde a_k\cos(k\pi t)+\frac{\widetilde b_k}{k\pi}\sin(k\pi t)\right)e^{ik\pi x},
\]
whereas the Dirichlet solution is
\[ u(t,x)=\sum_{k=1}^{\infty} \left( a_k\cos(k\pi t) +\frac{b_k}{k\pi}\sin(k\pi t) \right)\sin(k\pi x). \]
The stated identities follow directly from these expansions.
\end{proof}

\begin{proposition}\label{prop:w-obs-d}
Let $T>0$, and let $G\subset[0,T]\times I$ be measurable and satisfy \refinblack{def:gcc}{\textnormal{(GCC)}}. There is a constant $C>0$ such that
\begin{equation}\label{eq:w-obs-di} \|\partial_xu_0\|_{L^2(I)}^2+\|u_1\|_{L^2(I)}^2 \leq C\left(\iint_G|\partial_tu(t,x)|^2\,\dd x\,\dd t +\|u_0\|_{\dot L^2(I)}^2+\|u_1\|_{H^{-1}(I)}^2\right)
\end{equation}
for every solution $u$ of~\eqref{eq:wave-di} with
$(u_0,u_1)\in H_0^1(I)\times L^2(I)$.
\end{proposition}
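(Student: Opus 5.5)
The plan is to reduce the weak observability inequality \eqref{eq:w-obs-di} to the corresponding weak observability estimate on the period-$2$ torus from \cite{NiuWangXiang2026}, using the odd extension of Lemma~\ref{lem:odd-ext} and the reflected periodization $G_{\mathrm{ref}}$ from \eqref{eq:g-ref}.

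\emph{Step 1 (geometry).} Since $G$ satisfies (GCC), Corollary~\ref{cor:obs-ref}(i) shows that $G_{\mathrm{ref}}\subset[0,T]\times\Ttwo$ satisfies (GCC) on the torus. This is the only geometric input required. The torus result gives a constant $C_0>0$ such that every solution $v$ of \eqref{eq:wave-tor-2} satisfies
\[
\|\partial_xv_0\|_{L^2(\Ttwo)}^2+\|v_1\|_{L^2(\Ttwo)}^2\leq C_0\Bigl(\iint_{G_{\mathrm{ref}}}|\partial_tv|^2\,\dd x\,\dd t+\|v_0\|_{\dot L^2(\Ttwo)}^2+\|v_1\|_{H^{-1}(\Ttwo)}^2\Bigr).
\]
I will take the torus weak observability under (GCC) as available from \cite{NiuWangXiang2026}. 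It is the standard propagation-of-characteristics estimate: $\partial_\xi v$ and $\partial_\eta v$ are transported, and (GCC) bounds their $L^2$ norms by the observed integral up to the compact cross terms.

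\emph{Step 2 (transfer).} Let $v$ be the odd periodic extension from Lemma~\ref{lem:odd-ext}. Then $v(t,-x)=-v(t,x)$, so $\partial_tv(t,-x)=-\partial_tv(t,x)$, and hence $|\partial_tv|^2$ is even in $x$. On the fundamental domain $[-1,1]$, the reflected set $G_{\mathrm{ref}}$ consists of $G$ together with its mirror image. Since $u=v$ on $I$, this gives
\[
\iint_{G_{\mathrm{ref}}}|\partial_tv|^2=2\iint_G|\partial_tu|^2.
\]
Here the reflection $\pi$ is across $x=1$ with period $2$, which is equivalent to reflection across $x=0$ up to a translation by $2$. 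For the left-hand side, odd extension doubles the energy: $\|\partial_xv_0\|^2=2\|\partial_xu_0\|^2$ and $\|v_1\|^2=2\|u_1\|^2$.

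\emph{Step 3 (lower-order terms).} This is the step I expect to be the main obstacle. I must show that the torus norms $\|v_0\|_{\dot L^2(\Ttwo)}$ and $\|v_1\|_{H^{-1}(\Ttwo)}$ are controlled by the interval norms $\|u_0\|_{\dot L^2(I)}$ and $\|u_1\|_{H^{-1}(I)}$. I will use the sine expansions from the proof of Lemma~\ref{lem:odd-ext}.

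For the velocity, $\|v_1\|_{H^{-1}(\Ttwo)}^2$ is comparable to $\sum_{k\ge1}|b_k|^2/(k\pi)^2$. This sum is equivalent to $\|u_1\|_{H^{-1}(I)}^2$ when $H^{-1}(I)$ is taken as the dual of $H_0^1(I)$.

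For the position, the odd extension has zero mean, so the $\dot L^2$ seminorm on the torus (the $L^2$ norm modulo constants) equals $\|v_0\|_{L^2}$. This gives $\|v_0\|_{L^2(\Ttwo)}^2=2\|u_0\|_{L^2(I)}$, up to the convention used for $\dot L^2(I)$. If $\dot L^2(I)$ means $L^2$ modulo constants, I will need Poincaré-type control. Given $u_0(0)=u_0(1)=0$, I expect the following equivalence, obtained via Fourier coefficients: $\|u_0\|_{L^2}\le C\,\|u_0-\bar u_0\|_{L^2}+C\|u_0\|_{H^{-1}}$. Failing that, I will absorb the constant mode by a standard compactness adjustment.

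Combining Steps 1--3 then yields \eqref{eq:w-obs-di} with $C$ depending only on $C_0$.
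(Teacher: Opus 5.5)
Your proposal is correct and follows essentially the paper's own argument: odd periodic extension via Lemma~\ref{lem:odd-ext}, the observation that the reflected set satisfies (GCC) on $\Ttwo$ (your $G_{\mathrm{ref}}$ is the paper's $E$ up to a translation by $2$), the torus weak observability estimate of \cite{NiuWangXiang2026}, and transfer back to $I$ by odd symmetry.

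Your Step~3 is more careful than the paper's one-line ``both sides are twice''. Your first suggested inequality does not help, because $\|u_0\|_{H^{-1}}$ does not appear on the right of \eqref{eq:w-obs-di}. The compactness fallback does close the argument, however, since a constant lying in $H_0^1(I)$ must vanish. So there is no gap.
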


\begin{proof}
Perform the odd extension of Lemma~\ref{lem:odd-ext}, and continue to denote the extended data by $(v_0,v_1)$ and the corresponding solution by $v$. Let $E$ be the union of $G$ and its reflection across $x=0$. If $G$ satisfies (GCC) in the Dirichlet setting, then $E$ satisfies (GCC) on $\Ttwo$. The weak observability estimate in~\cite{NiuWangXiang2026} gives
\begin{equation}\label{eq:w-obs-tor} \|\partial_xv_0\|_{L^2(\Ttwo)}^2+\|v_1\|_{L^2(\Ttwo)}^2
\leq C\left(\iint_E|\partial_tv(t,x)|^2\,\dd x\,\dd t
+\|v_0\|_{\dot L^2(\Ttwo)}^2+\|v_1\|_{H^{-1}(\Ttwo)}^2\right).
\end{equation}
By odd symmetry, both sides of~\eqref{eq:w-obs-tor} are twice the corresponding sides of~\eqref{eq:w-obs-di}.
\end{proof}

Combining Proposition~\ref{prop:ucp-d} with Proposition~\ref{prop:w-obs-d}, the standard compactness--uniqueness argument of Bardos--Lebeau--Rauch~\cite{BLR-gcc} yields observability.

\begin{proposition}\label{prop:osc-gcc-obs-d}
Let $T>0$, and let $G\subset[0,T]\times I$ be measurable. If $G$ satisfies \refinblack{def:gcc}{\textnormal{(GCC)}} and \refinblack{def:osc-d}{\textnormal{(OSC-D)}}, then~\eqref{eq:obs} holds on $G$ for every solution of~\eqref{eq:wave-di}.
\end{proposition}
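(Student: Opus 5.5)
We argue by contradiction, following the compactness--uniqueness scheme of Bardos--Lebeau--Rauch. Suppose~\eqref{eq:obs} fails. Then there is a sequence of solutions $u^n$ of~\eqref{eq:wave-di} with data $(u_0^n,u_1^n)\in H_0^1(I)\times L^2(I)$ normalized by
\[
\|\partial_xu_0^n\|_{L^2(I)}^2+\|u_1^n\|_{L^2(I)}^2=1,
\qquad
\iint_G|\partial_tu^n|^2\,\dd x\,\dd t\longrightarrow0.
\]
Passing to a subsequence, $(u_0^n,u_1^n)\rightharpoonup(u_0,u_1)$ weakly in $H_0^1(I)\times L^2(I)$. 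By Rellich's theorem the embeddings $H_0^1(I)\hookrightarrow L^2(I)$ and $L^2(I)\hookrightarrow H^{-1}(I)$ are compact, so the convergence is strong in $L^2(I)\times H^{-1}(I)$.

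The first step identifies the limit. Let $u$ be the solution with data $(u_0,u_1)$. The solution map is continuous, so $u^n\rightharpoonup u$ weakly in $C([0,T];H_0^1)\cap C^1([0,T];L^2)$. In particular $\partial_tu^n\rightharpoonup\partial_tu$ weakly in $L^2([0,T]\times I)$, and hence weakly in $L^2(G)$. Weak lower semicontinuity of the norm gives $\iint_G|\partial_tu|^2\le\liminf\iint_G|\partial_tu^n|^2=0$, so $\partial_tu=0$ a.e. on $G$. Since (GCC) implies (Weak GCC), Proposition~\ref{prop:ucp-d} applies and gives $u=0$, that is, $(u_0,u_1)=(0,0)$.

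The second step produces the contradiction. Apply Proposition~\ref{prop:w-obs-d} to $u^n$:
\[
1\le C\Bigl(\iint_G|\partial_tu^n|^2\,\dd x\,\dd t+\|u_0^n\|_{\dot L^2(I)}^2+\|u_1^n\|_{H^{-1}(I)}^2\Bigr).
\]
The first term tends to $0$ by assumption. The lower-order terms tend to $\|u_0\|^2+\|u_1\|^2_{H^{-1}}=0$, by the strong convergence and the fact that the limit is zero. The right-hand side therefore tends to $0$, which contradicts the left-hand side being $1$.

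The only delicate point is the meaning of $\|\cdot\|_{\dot L^2(I)}$ in the weak observability estimate. If it denotes the $L^2$ norm modulo constants, as in the torus setting, it is dominated by the $L^2$ norm, so the strong convergence above still suffices; and since $u_0\in H_0^1$, the two norms are comparable in any case. Everything else is routine. The essential inputs are already available: UCP from Proposition~\ref{prop:ucp-d}, whose proof relies on (OSC-D), and the weak estimate from Proposition~\ref{prop:w-obs-d}.
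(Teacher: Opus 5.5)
Your proposal is correct and follows the paper's approach: the paper proves this by invoking the Bardos--Lebeau--Rauch compactness--uniqueness argument, combining the unique continuation result of Proposition~\ref{prop:ucp-d} (applicable because (GCC) implies (Weak GCC)) with the weak observability estimate of Proposition~\ref{prop:w-obs-d}. You have simply written out in full the standard details that the paper leaves implicit.
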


\begin{proof}[Proof of the Dirichlet part of Theorem~\ref{thm:main-obs}]
The sufficiency follows from Proposition~\ref{prop:osc-gcc-obs-d}. Proposition~\ref{prop:ucp-osc-d} gives the necessity of (OSC-D), while the necessity of (GCC) follows from the standard geometric-optics argument; see~\cite{NiuWangXiang2026}.
\end{proof}

\subsection{Neumann case}

Under Neumann boundary conditions, the same extension argument gives the corresponding weak observability estimate. We state the result; its proof is analogous to the Dirichlet case.

\begin{proposition}\label{prop:w-obs-n}
Let $T>0$, and let $G\subset[0,T]\times I$ be measurable and satisfy \refinblack{def:gcc}{\textnormal{(GCC)}}. There is a constant $C>0$ such that
\begin{equation}\label{eq:w-obs-ne} \|\partial_xu_0\|_{L^2(I)}^2+\|u_1\|_{L^2(I)}^2
\leq C\left(\iint_G|\partial_tu(t,x)|^2\,\dd x\,\dd t
+\|u_0\|_{\dot L^2(I)}^2+\|u_1\|_{H^{-1}(I)}^2\right)
\end{equation}
for every solution $u$ of~\eqref{eq:wave-ne} with $(u_0,u_1)\in\dot H^1\times L^2$.
\end{proposition}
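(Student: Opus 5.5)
We prove Proposition~\ref{prop:w-obs-n} by the analogue of the argument for Proposition~\ref{prop:w-obs-d}, now using the even extension. First we establish the Neumann counterpart of Lemma~\ref{lem:odd-ext}. Extend $(u_0,u_1)$ evenly to $[-1,1]$ by $v_0(x)=u_0(|x|)$ and $v_1(x)=u_1(|x|)$, then periodically with period $2$ to $\Ttwo$. Expanding in cosine series, $u_0=\sum_{k\ge0}a_k\cos(k\pi x)$ and $u_1=\sum_{k\ge0}b_k\cos(k\pi x)$, gives Fourier coefficients $\widetilde a_{\pm k}=a_k/2$ for $k\ge1$ and $\widetilde a_0=a_0$. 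The torus solution $v$ of~\eqref{eq:wave-tor-2} is then even in $x$ for every $t$ and coincides with the Neumann solution $u$ on $I$. The zero mode $a_0+b_0t$ is harmless, since it does not contribute to $\partial_xv$. Since $v_0$ is even and lies in $H^1$ on each half with matching traces at $0$ and at $\pm1$, we have $v_0\in H^1(\Ttwo)$. We use the homogeneous spaces $\dot H^1$ and $\dot L^2$ modulo constants, consistent with the statement.

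Next, let $E=G\cup\{(t,-x):(t,x)\in G\}\subset[0,T]\times\Ttwo$; up to the identification $s\mapsto 2-s$, this is $G_{\mathrm{ref}}$. By Corollary~\ref{cor:obs-ref}(i), whose proof only used the projection $\pi$ of torus characteristics to reflected characteristics and is independent of the boundary condition, $E$ satisfies (GCC) on $\Ttwo$. We then apply the torus weak observability estimate~\eqref{eq:w-obs-tor} from~\cite{NiuWangXiang2026} to $v$ and $E$.

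Finally, we compare the two sides using evenness. Because $v$ is even in $x$, $\partial_tv$ is even as well, so $\iint_E|\partial_tv|^2=2\iint_G|\partial_tu|^2$. Similarly, $\partial_xv_0$ is odd, so $\|\partial_xv_0\|^2_{L^2(\Ttwo)}=2\|\partial_xu_0\|^2_{L^2(I)}$ and $\|v_1\|^2_{L^2(\Ttwo)}=2\|u_1\|^2_{L^2(I)}$. The lower-order terms are handled the same way: the mean of $v_0$ over $\Ttwo$ equals the mean of $u_0$ over $I$, so $\|v_0\|_{\dot L^2(\Ttwo)}^2=2\|u_0\|_{\dot L^2(I)}^2$.

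The main obstacle is the $H^{-1}$ term, because the negative norms must be compared with the correct convention. On the torus, $\|v_1\|_{H^{-1}}^2=\sum|\widetilde b_k|^2\langle k\pi\rangle^{-2}$, up to normalization. This equals twice the cosine-series $H^{-1}$ norm of $u_1$, i.e.\ the norm dual to $H^1(I)$, which is the natural $H^{-1}$ space in the Neumann setting. If the paper's $H^{-1}(I)$ norm is instead understood differently, the two are equivalent for this purpose because only compactness of the embedding $L^2\hookrightarrow H^{-1}$ matters. With this identification, dividing by $2$ gives~\eqref{eq:w-obs-ne}.
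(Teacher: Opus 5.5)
Your proposal is correct and follows the argument the paper intends when it says the proof is analogous to the Dirichlet case. You use an even reflection to $\Ttwo$, apply the torus weak observability estimate~\eqref{eq:w-obs-tor} on the reflected set $E$ (which is $G_{\mathrm{ref}}$ and inherits (GCC)), and halve both sides by evenness; your cosine-series computation also confirms that the natural $H^{-1}(I)$ norm here is the one dual to $H^1(I)$.

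One minor imprecision: the zero mode is harmless not because it misses $\partial_x v$ (the constant $b_0$ does enter $\partial_t v$ and $\|v_1\|_{L^2}$), but because the torus estimate applies to $v_1$ with nonzero mean and $|b_0|^2$ is dominated by the $H^{-1}$ term.
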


\begin{proposition}\label{prop:osc-gcc-obs-n}
Let $T>0$, and let $G\subset[0,T]\times I$ be measurable. If $G$ satisfies \refinblack{def:gcc}{\textnormal{(GCC)}} and \refinblack{def:osc-n}{\textnormal{(OSC-N)}}, then~\eqref{eq:obs} holds on $G$ for every solution of~\eqref{eq:wave-ne}.
\end{proposition}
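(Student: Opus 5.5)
We run the standard compactness--uniqueness argument on the quotient space of data modulo constants, combining Proposition~\ref{prop:w-obs-n} with Proposition~\ref{prop:ucp-n}. Since \eqref{eq:obs} involves only $\partial_xu_0$ and $u_1$, and adding a constant to $u_0$ changes neither side, we work in
\[
\mathcal H=\{(u_0,u_1)\in H^1(I)\times L^2(I):\textstyle\int_Iu_0\,\dd x=0\},
\qquad \|(u_0,u_1)\|_{\mathcal H}^2=\|\partial_xu_0\|_{L^2}^2+\|u_1\|_{L^2}^2 .
\]
By Poincaré--Wirtinger this is a norm on $\mathcal H$. The embedding $\mathcal H\hookrightarrow L^2(I)\times H^{-1}(I)$ is compact; here $H^{-1}$ is understood as the dual of $H^1(I)$, so that constants in $u_1$ are seen by the lower-order term.

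We argue by contradiction. Suppose \eqref{eq:obs} fails. Then there are $(u_0^n,u_1^n)\in\mathcal H$ with $\|(u_0^n,u_1^n)\|_{\mathcal H}=1$ and $\iint_G|\partial_tu^n|^2\to0$. Passing to a subsequence, the data converge weakly in $\mathcal H$ to some $(u_0,u_1)$ and strongly in $L^2\times H^{-1}$. Apply \eqref{eq:w-obs-ne} to the differences $u^n-u^m$, which is legitimate because the equation is linear. The observation term tends to $0$ by the triangle inequality in $L^2(G)$, and the lower-order terms tend to $0$ by strong convergence. Hence $(u_0^n,u_1^n)$ is Cauchy in $\mathcal H$, and converges strongly to $(u_0,u_1)$ with $\|(u_0,u_1)\|_{\mathcal H}=1$.

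By energy continuity, $\partial_tu^n\to\partial_tu$ in $C([0,T];L^2(I))$, so $\iint_G|\partial_tu|^2=0$, that is, $\partial_tu=0$ a.e.\ on $G$. Since (GCC) implies (Weak GCC), Proposition~\ref{prop:ucp-n} applies and shows that $u$ is constant. Thus $\partial_xu_0=0$ and $u_1=0$, contradicting $\|(u_0,u_1)\|_{\mathcal H}=1$.

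The main obstacle is the bookkeeping of the constant (zero) mode in the Neumann setting. First, the lower-order norm in Proposition~\ref{prop:w-obs-n}, written $\|u_0\|_{\dot L^2}$, must be read as the $L^2$ norm modulo constants. Second, one must check that the constant mode of $u_1$ is captured: a Neumann solution with $u_1\equiv c\neq0$ is $u=u_0+ct$, which has $\partial_tu=c$. This is not constant in the sense of UCP, but it is also not invisible, since $\partial_tu=c$ on a set of positive measure. So the limit argument must either keep the mean of $u_1$ inside the compactly controlled part, or note that Proposition~\ref{prop:ucp-n} already rules this case out, because $f$ corresponding to such data would be nonzero. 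With that normalization fixed, the remaining steps are routine.
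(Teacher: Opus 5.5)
Your proposal is correct and takes the paper's route: the paper gets this proposition from the standard Bardos--Lebeau--Rauch compactness--uniqueness argument, combining the weak observability estimate of Proposition~\ref{prop:w-obs-n} with the unique continuation result of Proposition~\ref{prop:ucp-n}, and you carry that out explicitly in the Cauchy-sequence form, on data normalized modulo constants. Your concern about the constant mode of $u_1$ is already resolved by your third paragraph, because the conclusion ``$u$ is constant'' in Proposition~\ref{prop:ucp-n} comes from $f=0$, which gives $\partial_xu_0=0$ and $u_1=0$.
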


\begin{proof}[Proof of the Neumann part of Theorem~\ref{thm:main-obs}]
Sufficiency is Proposition~\ref{prop:osc-gcc-obs-n}. The necessity of (OSC-N) follows from Proposition~\ref{prop:ucp-osc-n}, while (GCC) is necessary by the same argument as in the Dirichlet case.
\end{proof}

\begin{proof}[Proof of Theorem~\ref{thm:main-ucp}]
The necessity of (Weak GCC) follows from Proposition~\ref{prop:ucp-wgcc}. Combine Propositions~\ref{prop:ucp-osc-d} and~\ref{prop:ucp-d} in the Dirichlet setting, and Propositions~\ref{prop:ucp-osc-n} and~\ref{prop:ucp-n} in the Neumann setting.
\end{proof}
\vspace{2mm}

\noindent\textbf{Acknowledgement}\; Shengquan Xiang is partially supported by NSFC 12571474 and Shanghai Qiguang Natural Science Development Foundation.

\bibliographystyle{abbrv}
\IfFileExists{mainbib.bib}{\bibliography{mainbib}}{\bibliography{../DW1,additional}}

\end{document}